\newcounter{todocounter}
\newtheorem{theorem}{Theorem}
\newtheorem{proposition}[theorem]{Proposition}
\newtheorem{lemma}[theorem]{Lemma}
\newtheorem{definition}[theorem]{Definition}
\newtheorem{assumption}[theorem]{Assumption}
\newtheorem{remark}[theorem]{Remark}
\newtheorem*{notation*}{Notation}
\begin{document}

\title[Tikhonov functionals with a tolerance measure introduced in the regularization]{Tikhonov functionals with a tolerance measure introduced in the regularization}
%\title[On the connection between sparsity and tolerances in parameter and data space]{On the connection between sparsity and tolerances in parameter and data space}
\author{Iwona Piotrowska-Kurczewski$^1$   and \underline{Georgia Sfakianaki}$^{1}$}
\address{$^1$ Center for Industrial Mathematics, University of Bremen, Bremen, Germany}
\ead{\mailto{ipiotrow@uni-bremen.de}, \mailto{gsfakian@uni-bremen.de}}

\begin{abstract}
   We consider a modified Tikhonov-type functional for the solution of ill-posed nonlinear inverse problems. Motivated by applications in the field of production engineering, we allow small deviations in the solution, which are modeled through a tolerance measure in the regularization term of the functional. The existence, stability and weak convergence of minimizers are proved for such a functional, as well as the convergence rates in the Bregman distance. We present an example for illustrating the effect of tolerances on the regularized solution and examine parameter choice rules for finding the optimal regularization parameter for the assumed tolerance value. In addition, we discuss the prospect of reconstructing sparse solutions when tolerances are incorporated in the regularization functional.
\end{abstract} 

% Uncomment for keywords
\vspace{2pc}
\noindent{\it Keywords\/}: Nonlinear ill-posed problems, Inverse problems, Tikhonov regularization %\ams{47A52, 47J06, 35R30, 49K40}
	 
% Uncomment for Submitted to journal title message
%\submitto{\IP}
%
% Uncomment if a separate title page is required
%\maketitle
% 
% For two-column output uncomment the next line and choose [10pt] rather than [12pt] in the \documentclass declaration
%\ioptwocol 
%
\section{Introduction}

The  classical inverse problem  is described by an operator equation of the form
\begin{equation}\label{invpb}
    F(\ud) = v, 
\end{equation}
where $F$ is a linear or non-linear operator between some Hilbert/Banach spaces $U$ and $V$. In the case of ill-posedness, we resort to regularization methods for approximating the true solution $\ud$. The most developed and widely used method for solving ill-posed inverse problems is Tikhonov regularization, see~\cite{Tikhonov1977solutions,Tikhonov1998nonlinear}. Some of the classical results on Tikhonov regularization can be found in~\cite{Bonesky_etal_2008,EnglHankeNeubauer1996,Grasmair2008,Hofmann_et_al_2007,JinMaass,kaltenbacher2008iterative,Scherzer2008,Schuster2012}. Here, the regularized solution $\uad$ is defined as the minimizer of the Tikhonov functional
\begin{equation}\label{Tikhonov}
    %\mathcal{J}_{\alpha ,\delta}^{p,q}(u)=\|F(u)-v^{\delta}\|_V^p+\alpha R_q(u)
    \T_\alpha^\delta(u)=\|F(u)-v^{\delta}\|_V^p+\alpha \Rq(u), 
\end{equation} 
which consists of a discrepancy and a regularization term (also called penalty term). Through the regularization term we are able to include a-priori knowledge about the true solution. 

In recent years, the concept of sparsity is considered a powerful tool, especially in applications, see for instance~\cite{Bredies2009,DDD,Grasmair2008,JinMaass,RamlauTeschke2006}. In this case the true solution has a sparse representation in the given  basis or frame for the parameter space $U$, i.e., only a few coefficients are different from zero. It turns out that in many applications one has to choose between classical and sparse regularization. The new challenge, resulting from real-world applications, is to allow some deviations in the data $v^{\delta}$. In~\cite{Gralla2020} Tikhonov functionals incorporating tolerances in the discrepancy term were studied for the solution of inverse problems. The authors proposed an altered Tikhonov functional of the form
\begin{equation}\label{Tikhonove}
    \T_{\alpha,\ve}^\delta(u)=\norm{ d_{\ve}\left(F(u)-v^{\delta}\right)}_V^p+\alpha \Rq (u),
\end{equation} 
where $d_\ve(\cdot)$ denotes the $\varepsilon$-insensitive distance $d_{\ve}(\cdot)=\max\{\abs{\;\cdot\;} - \ve,0\}$. This approach makes sense, e.g., in production engineering. In the case of surface treatment, tolerances for the quality of the end product or for the measurement accuracy are often specified. These  methods have been successfully applied to the problem of process design in micro production and applications in image processing. In addition to the original reference, we refer the user to~\cite{Gralla2018_ICNFT} and~\cite{Gralla2018}, too. For linear operators the case $\varepsilon >0$ and $p=q=1$ is a generalization of Support Vector Regression (SVR) which can be used for treating ill-posed inverse problems, see for instance~\cite{smola2004tutorial}. Furthermore, in~\cite{krebs2011support} a rigorous analysis incorporating discrepancy terms with tolerance for solving linear integral equations was presented, under a semi-discrete setting in reproducing kernel Hilbert spaces (RKHS). 

%In~\cite{Gralla2020} Tikhonov functionals incorporating tolerances in the discrepancy term, as defined in~\eref{Tikhonove}, were studied for the solution of inverse problems, this approach can be found in~\cite{Gralla2020, Gralla2018}. These  methods have been successfully applied to the problem of process design in micro production and applications in image processing,~\cite{Gralla2018_ICNFT} and~\cite{Gralla2018}, respectively. This idea is quite new and still under development.

Inspired by the great potential of such approaches in applications, in our work we examine the effect of tolerances in the regularization term of Tikhonov functionals. Including these inside the penalty term means that the solution will eventually lie inside a confidence interval. An application of interest is the development of new structural materials. In this case, the goal is to find appropriate values for a set of production parameters, like chemical composition, heating or cooling, to finally obtain materials satisfying certain properties. The desired properties of the new materials are given in the form of intervals, or in the form of a so-called performance profile, for further reading refer to~\cite{Otero2018}.

\subsection{Regularization functional with tolerances}

As discussed in the introduction, the $\ve$-\emph{insensitive function} $d_{\varepsilon}$ comes from the theory of SVR, for further reading see~\cite{krebs2011support, Scho2002, Vapnik99}, and was first introduced by Cortes and Vapnik in~\cite{CortesVapnik1995}. For a given $\varepsilon \geq 0$ the function $d_\varepsilon:\, \R \rightarrow \R$ is defined as 
\begin{eqnarray}\label{equ:deps}
    d_\varepsilon(x) := {\abs{x}}_\varepsilon = \max\lbrace \vert x \vert-\varepsilon,\, 0\rbrace.
\end{eqnarray}
%Moreover, its subdifferential is given by 
%\begin{eqnarray}\label{E:pointwise_tol_subdiff}
%    \partial d_\ve(x) := \partial {\abs{x}}_\varepsilon = 
%	\cases{
%		-1 &if $x < -\ve$ \\ 
%		 0 &if $\abs{x} \leq \ve$ \\
%		 1 &if $x > \ve$ }.
%\end{eqnarray}
%As can be seen, the subdifferential differs from the classical sign function only in that it take zero value for all $x$ in the interval $[-\ve,\ve]$. 
In~\Fref{F:figure1a}, $\de$ as given in~\eref{equ:deps} is plotted in comparison to the absolute value function while~\Fref{F:figure1b} shows their subdifferentials. In the following, we often use the term \emph{tolerance function} when referring to the $\ve$-insensitive function. Two analogous definitions are used within this work which differ in $\varepsilon$ being a sequence or a function. We follow the definition in~\cite[Definition 1]{Gralla2020} and define the $\varepsilon$-insensitive modulus $d_{\varepsilon,n}$. 
\begin{definition}[$\varepsilon$-modulus function] \label{def:eps_modulus} 
    For $0 <\varepsilon \in \R^n$ we define the $\varepsilon$-insensitive modulus  $d_{\varepsilon,n}:\, \R^n \rightarrow \R^n$  component wise as
    \begin{eqnarray}\label{equ:deps_1}
	    d_{\varepsilon,n}(x)_i := d_{\varepsilon_i}(x_i), \quad i=1,\ldots,n.
    \end{eqnarray}
    For $\varepsilon: \Omega \rightarrow \R^n$, with $0 < \varepsilon \in L_q(\Omega)^n$ we define the $\varepsilon$-insensitive modulus function  $d_{\varepsilon,\Omega}:\, L_q(\R^n) \rightarrow L_q(\R^n)$ by  
    \begin{eqnarray}\label{equ:deps_2}
	    d_{\varepsilon,\Omega}(f)(\cdot):=d_{\varepsilon,n}(f(\cdot)).
    \end{eqnarray}
\end{definition}

\begin{figure}
	\begin{center}%
		\subfloat[]
		{\label{F:figure1a}
		\includegraphics[width=0.45\textwidth]{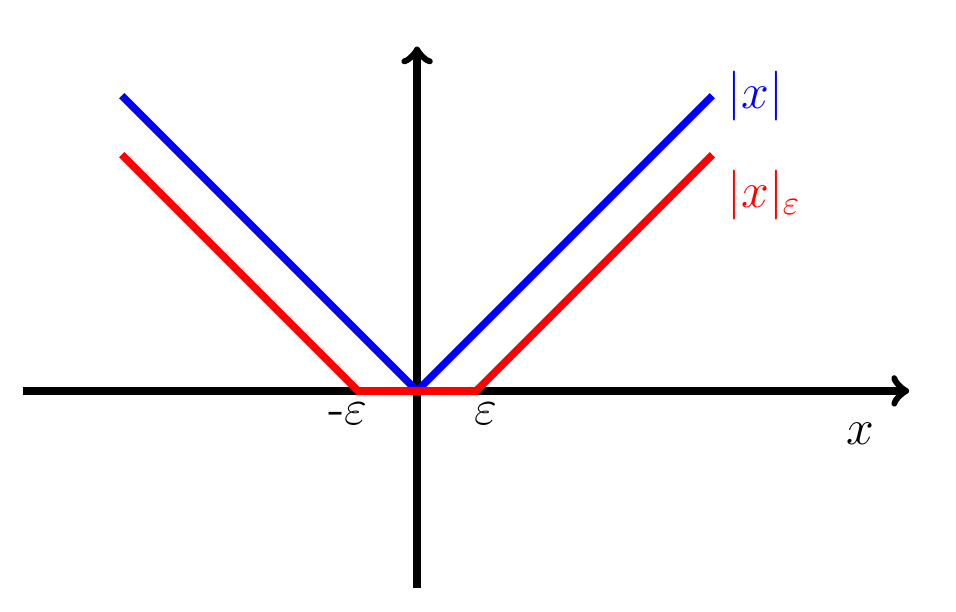}}
		\subfloat[]
		{\label{F:figure1b}
		\includegraphics[width=0.45\textwidth]{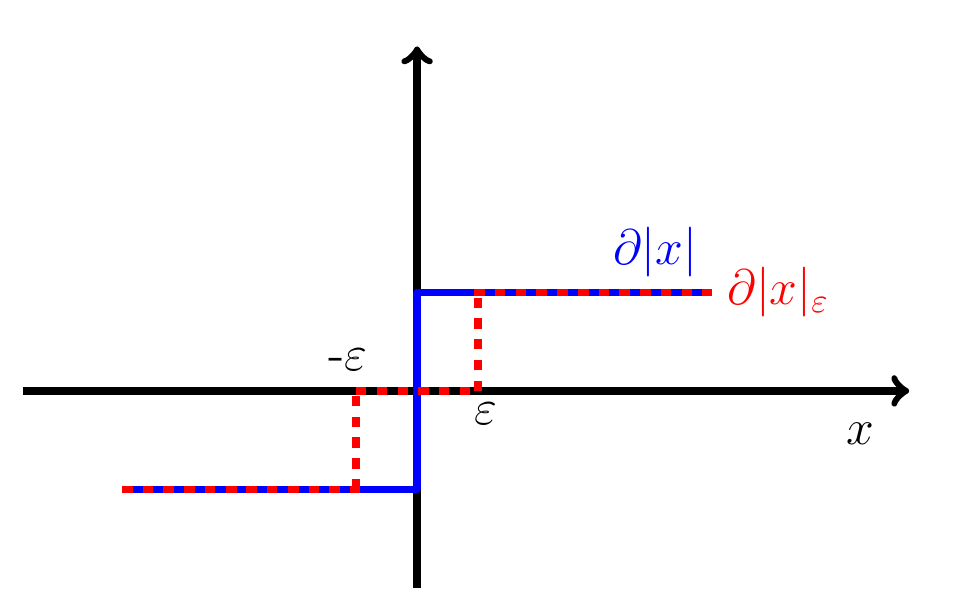}}
		\caption{The tolerance function $d_\varepsilon(x)=\abs{x}_\ve$ in comparison with the absolute value $\abs{x}$ for $x \in \R$ and $\ve >0$ in~(a), and their subdifferentials in (b).}    
		\label{F:figure1}
	\end{center} 
\end{figure}

For simplicity of notation, we write $d_{\varepsilon}$ for all cases.  In both definitions given in \eref{equ:deps_1} and \eref{equ:deps_2} the equation \eref{equ:deps} is applied point-wise. Analogously using the Definition~\ref{def:eps_modulus} point-wise in the $L_q$-induced norm we obtain a distance function in $L_q(\Omega)$ space.
\begin{definition}[$L_{q,\varepsilon}$-insensitive measure] \label{Def1Lqe_distance}
Let $\Omega\subset\R^n$ be a bounded and closed  and let $\varepsilon\in L_q(\Omega)$. The $L_{q,\varepsilon}$-insensitive measure is denoted via
    \begin{eqnarray}\label{equ:Lqeps}
	    \left\| u | L_q(\Omega)\right\|_\varepsilon = \|u\|_{L_{q,\varepsilon}} =  \| u \|_{q,\, \varepsilon} := \left(\int_\Omega d_\varepsilon (u(x))^q\; \dx\right)^\frac{1}{q}.
    \end{eqnarray}
%     For $\varepsilon(x):\Omega\rightarrow\R$ and $\varepsilon(x) \in L_q(\Omega)$ we define analogously
%    \begin{eqnarray}\label{equ:Lqepsext}
%	    \left\| u | L_q(\Omega)\right\|_\varepsilon := \| u \|_{q,\, \varepsilon} := \left(\int_\Omega d_{\varepsilon(x)}
%	 (u(x))^q\; \dx\right)^\frac{1}{q}.
 %   \end{eqnarray}
\end{definition}

Our definition agrees with the one given in~\cite{Gralla2020}, and for the case of $L_q(\R^n)$ we further have to assume that $\varepsilon$ is bounded. For notational simplicity of our subsequent analysis, $\norm{\cdot}_{L_{q,\varepsilon}}$ will often be denoted by $\norm{\cdot}_{q,\varepsilon}$.

In regularization methods we often assume a reference solution which is included in the penalty term as a-priori information on the true solution of the problem. Denoting with $u^* \in L_q(\Omega)$ the reference solution and assuming including the tolerances, our penalty term is of the form 
\begin{equation}\label{penatlyRq-eps}
    \Rqe(u) := \norm{u - u^*}^q_{q,\ve} = \int_\Omega \left(\max \left\{ \abs{u(x)-u^*(x)} - \ve, 0 \right\}\right) ^q\;\mathrm{d}x,   
\end{equation}
where $1\leq q \leq 2$ and $\Omega$ bounded set in $\R^n$. Since $u^*$ does not affect our theoretical analysis, for simplicity, we assume it to be zero and we only consider it later in our numerical results. 

The functional $\Rqe$ is weakly lower semi-continuous and fulfills the following inequalities 
\begin{eqnarray}
    \norm{u}_{L_{q,\ve}} &\leq \norm{u}_{L_q}, \label{ineq1} \\
    \norm{u}_{L_q} &\leq \norm{u}_{L_{q, \ve}} + \norm{\ve}_{L_q}, \label{ineq2}
\end{eqnarray} 
which have been proved in~\cite{Gralla2020}. Furthermore, $\Rqe$ is continuous, convex for $q \geq 1$, whereas for $q>1$ is strictly convex. By \eref{ineq1} it is obvious that $d_\ve(u) \in L_q(\Omega)$ and, therefore, $\Rqe$ is well defined.

\begin{proposition}
	Let $\ve \in L_q(\Omega)$ for $1\leq q\leq 2$. The regularization functional $\Rqe(u) $  given by \eref{penatlyRq-eps} is coercive.
\end{proposition}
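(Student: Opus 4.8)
The goal is to show $\Rqe(u)=\|u\|_{q,\ve}^q\to\infty$ whenever $\|u\|_{L_q}\to\infty$ (coercivity on $L_q(\Omega)$). The plan is to exploit inequality~\eref{ineq2}, which already relates the ordinary $L_q$-norm to the $\ve$-insensitive measure plus a fixed quantity $\|\ve\|_{L_q}$. Rearranging~\eref{ineq2} gives
\[
    \|u\|_{L_{q,\ve}} \;\geq\; \|u\|_{L_q} - \|\ve\|_{L_q},
\]
and since $\Omega$ is bounded and $\ve\in L_q(\Omega)$, the term $\|\ve\|_{L_q}$ is a finite constant independent of $u$. Hence $\|u\|_{L_{q,\ve}}\to\infty$ as $\|u\|_{L_q}\to\infty$, and therefore $\Rqe(u)=\|u\|_{L_{q,\ve}}^q\to\infty$ as well, because $q\geq 1$ makes $t\mapsto t^q$ increasing and unbounded on $[0,\infty)$.

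The only care needed is the sign and the regime where the lower bound is vacuous: the estimate $\|u\|_{L_{q,\ve}}\geq \|u\|_{L_q}-\|\ve\|_{L_q}$ is only informative once $\|u\|_{L_q}>\|\ve\|_{L_q}$, but that is exactly the regime relevant for coercivity, where $\|u\|_{L_q}$ is large. So I would phrase it as: given any $M>0$, pick any $u$ with $\|u\|_{L_q}\geq M+\|\ve\|_{L_q}$; then $\Rqe(u)\geq (M)^q$, which goes to infinity with $M$. Equivalently one can simply write $\Rqe(u)\geq(\|u\|_{L_q}-\|\ve\|_{L_q})_+^q\to\infty$.

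There is essentially no obstacle here — the statement is a direct corollary of~\eref{ineq2}, which is quoted from~\cite{Gralla2020} and may be assumed. The one point worth stating explicitly is why $\|\ve\|_{L_q}<\infty$: this uses that $\Omega$ is bounded together with the standing assumption $\ve\in L_q(\Omega)$ from the hypothesis, so that subtracting it leaves a constant and does not interfere with the divergence of $\|u\|_{L_q}$. If one preferred to avoid citing~\eref{ineq2}, an alternative direct argument is to split $\Omega$ into the set where $|u(x)|\leq 2\ve(x)$ and its complement; on the complement $d_\ve(u(x))=|u(x)|-\ve(x)\geq \tfrac12|u(x)|$, and the contribution of the bounded-modulus part to $\|u\|_{L_q}^q$ is controlled by $2^q\|\ve\|_{L_q}^q<\infty$, yielding the same conclusion. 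I would keep the proof to the two-line argument via~\eref{ineq2}.
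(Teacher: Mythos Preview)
Your proof is correct and follows essentially the same one-line approach as the paper: deduce coercivity directly from the norm-comparison inequalities relating $\|\cdot\|_{L_{q,\ve}}$ and $\|\cdot\|_{L_q}$. You correctly invoke~\eref{ineq2} and rearrange it to the lower bound $\|u\|_{L_{q,\ve}} \geq \|u\|_{L_q} - \|\ve\|_{L_q}$; the paper's printed proof instead cites~\eref{ineq1}, which appears to be a typo, since $\|u\|_{L_{q,\ve}} \leq \|u\|_{L_q}$ is an upper bound and cannot by itself force $\|u\|_{L_{q,\ve}}\to\infty$.
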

\begin{proof}
    This follows directly from the inequality~\eref{ineq1} since taking \(\norm{u}_{L_q} \to \infty\) leads to the conclusion that 
    \(\Rqe(u) = \norm{u}_{L_{q,\ve}}^q \to \infty \).%
\end{proof}

\subsection{Tikhonov functional with tolerance in regularization term}

Assuming $U = L_q(\Omega)$ over a bounded set $\Omega \subset \Rn$ and $V$ to be a reflexive Banach space, we consider an altered Tikhonov functional including the \emph{tolerance function} described in the previous section in the regularization term, that is
\begin{equation}\label{TikhonovRe}
    \Jevd(u):= \norm{F(u) - v^\delta}^p_V + \alpha \Rqe(u).
\end{equation}
Here $F : \dom (F)\subset U \to V$ is a nonlinear operator between  $U $ and $ V$ and the noisy data $\vd = v + n(\delta)$ are created with additive noise with level noise $\delta>0$ and are such that $\norm{v-\vd}_V \leq \delta$. The regularization term  $\Rqe : U \to \R_+$ for $1\leq q\leq 2$ includes the tolerance $\ve$ and  is  given by \eref{penatlyRq-eps}. We aim at investigating the analytical properties of minimizers $\uade$. Moreover, we examine the connection between tolerances in parameter space and sparsity regularization.
The following assumption remains valid throughout the paper. 

\begin{assumption}\label{AssumptionOnF}
    \begin{itemize}
        \item[(i)] Let $F : \Df \subset U \to V$ be weakly sequentially closed with respect to the weak topology on $U$. 
        %That is, for every sequence $\ukseq \subset \dom(F)$ weakly convergent to \(u \in U\) such that \(\{F(u_k)\}_{k \in \N} \wconv v \in V\), there holds \(u \in \dom(F)\) and \(F(u) = v\).
        \item[(ii)] The set $\D := \dom(F) \cap \dom(\Rqe)$ is non-empty. Note that this assumption implies that $\Rqe$ is proper.
    \end{itemize}
\end{assumption}

Furthermore, in the proofs of convergence and convergence rates of the minimizers of $\Jevd$, we use the concept of an $\mathcal{R}_{(\cdot)}$-minimizing solution. 

\begin{definition}[$\mathcal{R}$-minimizing solution] \label{Rminsol}
	The element $\ud \in U$ is called an $\mathcal{R}$-minimizing solution, if $F(\ud) = v$ and \(\mathcal{R}(\ud) = \min\limits_{u \in U} \left\{\mathcal{R}(u) : F(u) = v\right\}\).
\end{definition}

\section{Well-posedness}
We begin with the existence of minimizers $\uade:= \arg\min \Jevd$. Then, we continue with results on the stability of minimizers i.e., we prove that the minimizer depends continuously on the data. In the following results we use the next lemma which can be found in~\cite{Grasmair2008}.

\begin{lemma}\label{lemma6}
    Let $\ukseq \subset \dom(F)$. Assume that $\ve >0$ is fixed, $\vkseq \subset V$ is a bounded sequence in $V$ and that there exist $\alpha > 0$ and $M > 0$ such that $\Jevk(u_k) < M$, for all $k \in \N$. Then, there exist $\ut \in \dom(F)$ and a subsequence $\ukjseq$ such that $\ukj \wconv \ut$ and $F(\ukj) \wconv F(\ut)$. 
\end{lemma}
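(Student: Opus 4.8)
The plan is to reproduce the classical compactness argument for Tikhonov-type functionals, using the tolerance penalty only through its coercivity. From the uniform bound $\Jevk(u_k) < M$ and the non-negativity of the two summands of $\Jevk(u_k) = \norm{F(u_k) - v_k}_V^p + \alpha\,\Rqe(u_k)$, I would first separate
\[
  \norm{F(u_k) - v_k}_V^p < M \qquad\mbox{and}\qquad \Rqe(u_k) < M/\alpha \qquad\mbox{for all } k \in \N .
\]

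The second estimate together with the coercivity of $\Rqe$ proved above shows that $(u_k)_{k\in\N}$ is bounded in $U = L_q(\Omega)$; this is precisely where coercivity is indispensable, since $\Rqe$ is not a norm — it vanishes on the whole $\ve$-tube around $u^*$ — so a bound on the penalty carries no control on $\norm{u_k}_{L_q}$ without it (concretely the coercivity stems from inequality~\eref{ineq2}, $\norm{u}_{L_q} \le \norm{u}_{L_{q,\ve}} + \norm{\ve}_{L_q}$). Invoking reflexivity of $U$ (or, more generally, weak sequential precompactness of the sublevel sets of $\Rqe$, in the spirit of the abstract framework of~\cite{Grasmair2008}) we pass to a subsequence, not relabeled, with $u_k \wconv \ut$ for some $\ut \in U$.

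Next I would produce a weak limit of the images. By the triangle inequality and the boundedness of $(v_k)$ in $V$ assumed in the statement,
\[
  \norm{F(u_k)}_V \le \norm{F(u_k) - v_k}_V + \norm{v_k}_V < M^{1/p} + \sup_k \norm{v_k}_V ,
\]
so $(F(u_k))$ is bounded in $V$; since $V$ is reflexive, along a further subsequence $(u_{k_j})$ there is $w \in V$ with $F(u_{k_j}) \wconv w$. Finally, Assumption~\ref{AssumptionOnF}(i), the weak sequential closedness of $F$, applied to $u_{k_j} \wconv \ut$ and $F(u_{k_j}) \wconv w$, yields $\ut \in \dom(F)$ and $F(\ut) = w$; hence $F(u_{k_j}) \wconv F(\ut)$, which is the assertion.

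The only genuinely delicate step is the first extraction: one must be certain that coercivity of the tolerance penalty converts the bound on $\Rqe(u_k)$ into a bound on $\norm{u_k}_{L_q}$, and that the underlying space (or the relevant sublevel set) is weakly sequentially compact. Everything afterwards is the routine interplay of the triangle inequality with weak sequential closedness and does not exploit the particular form of $d_\ve$ at all.
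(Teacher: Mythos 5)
Your argument is, for $1<q\le 2$, exactly the one the paper points to by citing~\cite[Lemma 4]{Grasmair2008}: split the bound on $\Jevk(\uk)$ into its two nonnegative summands, convert the penalty bound into an $L_q$-bound via~\eref{ineq2}, extract a weakly convergent subsequence of $(\uk)$, use the resulting bound on $\norm{F(\uk)}_V$ together with reflexivity of $V$ (a standing assumption of the paper) to extract a further subsequence of the images, and close with the weak sequential closedness of $F$ from Assumption~\ref{AssumptionOnF}. All of these steps are correct as written in that range of $q$, and the order of the two extractions is handled properly.

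The one step that does not survive scrutiny is the first extraction when $q=1$, a case the paper's hypotheses ($1\le q\le 2$, $U=L_q(\Omega)$) explicitly allow. $L_1(\Omega)$ is not reflexive, and a norm bound in $L_1$ does not yield weak sequential precompactness: by Dunford--Pettis one needs uniform integrability, which the sublevel sets of $\mathcal{R}_{1,\ve}$ do not possess (take $u_k=k\chi_{[0,1/k]}$ on $\Omega=[0,1]$; then $\mathcal{R}_{1,\ve}(u_k)\le\norm{u_k}_{L_1}=1$, yet no subsequence converges weakly in $L_1$, since the only candidate limit is the Dirac measure at $0$). Your parenthetical fallback to ``weak sequential precompactness of the sublevel sets of $\Rqe$'' does not repair this, because in the present setting those sublevel sets are merely norm-bounded and the paper's Assumption~\ref{AssumptionOnF} does not postulate such precompactness. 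In~\cite{Grasmair2008} the issue does not arise: there the penalty is an $\ell^q$ norm of coefficients with respect to an orthonormal basis of a Hilbert space, and $\norm{\cdot}_{\ell^2}\le\norm{\cdot}_{\ell^q}$ for $q\le 2$, so an $\ell^1$ bound already controls the Hilbert norm and weak compactness holds there; in $L_q(\Omega)$ over a bounded domain the embedding runs the other way. So your proof is complete for $1<q\le 2$; for $q=1$ an additional hypothesis (uniform integrability of the sublevel sets, or passing to a weak-$*$ setting on measures) is required. This gap is inherited from the paper's framework rather than introduced by your argument, but it should be stated rather than elided.
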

\begin{proof}
    The proof of this Lemma is omitted as it follows with similar steps as in~\cite[Lemma 4]{Grasmair2008}.
\end{proof}
% \begin{proof} 
% 	The coercivity of $\Rqe$ together with the estimate
% 	\begin{equation}
% 		\Jevk(\uk) = \norm{F(\uk) - \vk}^p_V + \alpha \Rqe(\uk) \geq \alpha \Rqe(\uk). 
% 	\end{equation}
% 	implies that $$\alpha \Rqe(\uk) \leq \Jevk(\uk) < M, $$ i.e., $\ukseq$ is bounded in $U$. Similarly, since $\vkseq$ is bounded in $V$ it follows that the sequence $\{F(\uk)\}_{k \in \N}$ is bounded in $V$, too. Therefore, there exist a subsequence $\ukjseq$ and $\ut \in U$, $v \in V$ such that 
% 	%$$\ukj \wconv u \in U ~ \text{ and } ~ F(\ukj) \wconv v \in V.$$
% 	$$\ukj \wconv \ut ~ \text{ and } ~ F(\ukj) \wconv v.$$
% 	Since $F$ is weakly sequentially closed, it follows that $\ut \in \dom(F)$ and $F(\ut) = v$.
% \end{proof}

In the theorems, we closely follow the concept in~\cite{Grasmair2008} and~\cite{kaltenbacher2008iterative} and prove them for our Tikhonov functional with tolerances incorporated in the regularization term.
\begin{theorem}[Existence]
	Assume that $\ve > 0$ is fixed. For $\alpha > 0$ and for every $\vd \in V$ the functional $\Jevd$ has a minimizer $\uade$ in $\D$.
\end{theorem}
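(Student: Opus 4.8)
The plan is to use the direct method of the calculus of variations, following the standard Tikhonov framework but exploiting the specific properties of $\Rqe$ established earlier. First I would fix $\ve>0$, $\alpha>0$ and $\vd\in V$, and observe that $\Jevd$ is proper: by Assumption~\ref{AssumptionOnF}(ii) the set $\D=\dom(F)\cap\dom(\Rqe)$ is non-empty, so there is some $u_0\in\D$ with $\Jevd(u_0)<\infty$. Hence the infimum $m:=\inf_{u\in\D}\Jevd(u)$ is finite and non-negative, and we may pick a minimizing sequence $\ukseq\subset\D$ with $\Jevd(u_k)\to m$. In particular there is $M>0$ with $\Jevd(u_k)<M$ for all $k$.

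Next I would extract a weakly convergent subsequence. Since the constant data sequence $\vk:=\vd$ is trivially bounded in $V$ and $\Jevd(u_k)=\Jevk[\vd](u_k)<M$ uniformly, Lemma~\ref{lemma6} applies directly and yields $\ut\in\dom(F)$ together with a subsequence $\ukjseq$ such that $\ukj\wconv\ut$ in $U$ and $F(\ukj)\wconv F(\ut)$ in $V$. (Here one uses that $U=L_q(\Omega)$ with $1\le q\le 2$ is reflexive, and that boundedness of $\Rqe(u_k)$ gives boundedness of $\ukseq$ in $L_q$ via the coercivity inequality~\eref{ineq2}, namely $\norm{u_k}_{L_q}\le\norm{u_k}_{L_{q,\ve}}+\norm{\ve}_{L_q}$, which is what makes Lemma~\ref{lemma6} usable.) It remains to check $\ut\in\D$ and that $\ut$ attains the infimum.

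For the final step I would pass to the limit using lower semicontinuity of both terms along the weak subsequence. The discrepancy term $u\mapsto\norm{F(u)-\vd}_V^p$ is weakly lower semicontinuous because the norm on $V$ is weakly lower semicontinuous and $F(\ukj)\wconv F(\ut)$, so $\norm{F(\ut)-\vd}_V^p\le\liminf_j\norm{F(\ukj)-\vd}_V^p$. The regularization term is weakly lower semicontinuous by the remark preceding the Proposition (it is convex and continuous on $L_q$, hence weakly lsc), so $\Rqe(\ut)\le\liminf_j\Rqe(\ukj)$. Adding these and using superadditivity of $\liminf$ gives
\begin{equation*}
    \Jevd(\ut)\le\liminf_{j\to\infty}\Jevd(\ukj)=m.
\end{equation*}
In particular $\Jevd(\ut)<\infty$, so $\ut\in\dom(\Rqe)$ and, combined with $\ut\in\dom(F)$, we get $\ut\in\D$; thus $\Jevd(\ut)\ge m$ and therefore $\Jevd(\ut)=m$, i.e. $\uade:=\ut$ is a minimizer in $\D$.

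The only genuinely delicate point is the passage to the limit in the nonlinear discrepancy term, which is precisely where weak sequential closedness of $F$ (Assumption~\ref{AssumptionOnF}(i)) and the conclusion $F(\ukj)\wconv F(\ut)$ of Lemma~\ref{lemma6} are needed; everything else is routine, since the tolerance functional $\Rqe$ behaves exactly like a standard convex, coercive, weakly lsc penalty thanks to inequalities~\eref{ineq1} and~\eref{ineq2}. I would also note in passing that uniqueness is not claimed (and indeed fails in general, since for $q=1$ the functional is only convex, not strictly convex, and $\Rqe$ vanishes on the whole tolerance band $\{u:\abs{u}\le\ve\}$).
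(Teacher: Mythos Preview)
Your proof is correct and follows essentially the same direct-method argument as the paper: take a minimizing sequence, invoke Lemma~\ref{lemma6} to extract a weakly convergent subsequence along which $F$ also converges weakly, and then pass to the limit using weak lower semicontinuity of both the discrepancy and the penalty term. Your write-up is in fact more careful than the paper's (you explicitly verify properness and that the limit lies in $\D$); the only slip is the parenthetical remark that $L_q(\Omega)$ is reflexive for $1\le q\le 2$, which fails at $q=1$, but since Lemma~\ref{lemma6} is invoked as a black box this side comment is not load-bearing.
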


\begin{proof}
	Let $\ukseq$ satisfy \(\limk \Jevd(\uk) = \inf \left\{  \Jevd(u) : u \in D \right\}\).
	From Lemma \ref{lemma6}, there exists a subsequence $\ukjseq$ weakly converging to some $\ut \in \dom(F)$ such that $F(\ukj) \wconv F(\ut)$. From the weak lower semi-continuity of $\Rqe$ and $\norm{\cdot}^p_V$ and the fact that $F$ is weakly sequentially closed it follows that
	\begin{eqnarray*}
		 \Jevd(\ut) &\leq \liminfj \norm{F(\ukj) - \vd}^p + \alpha \liminfj \Rqe(\ukj) \\
			    &\leq \liminfj \left\{\norm{F(\ukj) - \vd}^p + \alpha \Rqe(\ukj) \right\} \\
			    &\leq \limsupj \Jevd(\ukj), \quad \forall u \in \dom(F).
	\end{eqnarray*}
	Therefore, $ \Jevd(\ut) \leq  \Jevd(u)$ for any $u \in \dom(F)$, which means that $\uade:=\ut \in \dom(F)$ is a minimizer of $ \Jevd$.
\end{proof}

\begin{notation*}
    If any of the ingredients $\vd, \alpha, \ve$ is taken as a (sub)sequence, the functional will be denoted including the respective (sub)sequence in its shorthand notation, e.g., given a sequence of noisy data $\vk$, we will write $\Jevk$ for denoting the functional $\Jevk(u):= \norm{F(u) - \vk}^p_V + \alpha \Rqe(u)$.
\end{notation*}
    
The next theorem concerns the stability of minimizers of $\Jevd$, namely, for fixed $\alpha >0$ we prove that the minimizer $\uade$ depends continuously on $\vd$. 

\begin{theorem}[Stability for fixed $\ve > 0$]\label{th2b}
	Assume $\alpha >0 $ and  $\ve > 0$ fixed. Let $\vkseq \subset V$ converge to some $v^\delta \in V$ and let 
	\[u_k \in \arg\min \left\{ \Jevk(u) : u \in \D \right\}.\]
	Then, there exist a subsequence $\ukjseq$ which converges weakly to a minimizer $\uade$ of the functional $\Jevd$. Moreover, we have that
	$\Rqe(u_{k_j})\rightarrow \Rqe(\uade).$
\end{theorem}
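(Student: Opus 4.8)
The plan is to run the standard stability argument for Tikhonov-type functionals, with the tolerance regularizer $\Rqe$ in the role of the penalty. First I would produce a uniform bound on the functional values along the minimizing elements. Fixing any competitor $\bar u \in \D$ (available by Assumption~\ref{AssumptionOnF}(ii)), minimality of $u_k$ yields
\[
\Jevk(u_k) \le \Jevk(\bar u) = \norm{F(\bar u) - v_k}_V^p + \alpha \Rqe(\bar u),
\]
and since $\vkseq$ converges in $V$ it is bounded, so the right-hand side is $\le M$ for some $M$ independent of $k$. This is precisely the hypothesis of Lemma~\ref{lemma6}, which then supplies $\ut \in \dom(F)$ and a subsequence with $\ukj \wconv \ut$ and $F(\ukj) \wconv F(\ut)$.

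Second, I would show $\ut$ minimizes $\Jevd$. Since $v_{k_j} \to \vd$ strongly and $F(\ukj) \wconv F(\ut)$, we get $F(\ukj) - v_{k_j} \wconv F(\ut) - \vd$; weak lower semicontinuity of $\norm{\cdot}_V^p$ and of $\Rqe$ then gives
\[
\Jevd(\ut) \le \liminfj \norm{F(\ukj) - v_{k_j}}_V^p + \alpha \liminfj \Rqe(\ukj) \le \liminfj \Jevk(\ukj).
\]
For the reverse direction, for an arbitrary $u \in \D$ minimality of $\ukj$ and continuity of $w \mapsto \norm{F(u)-w}_V^p$ give $\limsupj \Jevk(\ukj) \le \limsupj \Jevk(u) = \Jevd(u)$. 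Combining the two displays, $\Jevd(\ut) \le \Jevd(u)$ for every $u \in \D$, so $\uade := \ut$ is a minimizer of $\Jevd$.

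Third, for the convergence $\Rqe(\ukj) \to \Rqe(\uade)$, I would take $u = \uade$ in the two inequalities just derived, which forces $\lim_{j\to\infty} \Jevk(\ukj) = \Jevd(\uade)$. If $\Rqe(\ukj) \not\to \Rqe(\uade)$, then, using the uniform bound, along a further subsequence $\Rqe(\ukj) \to r$ for some finite $r$, and necessarily $r > \Rqe(\uade)$ since $r \ge \Rqe(\uade)$ by weak lower semicontinuity (as $\ukj \wconv \uade$ still holds) while $r \ne \Rqe(\uade)$. Along that subsequence $\norm{F(\ukj) - v_{k_j}}_V^p$ then also converges, with limit $\ge \norm{F(\uade) - \vd}_V^p$, whence $\Jevd(\uade) = \lim_{j\to\infty}\Jevk(\ukj) \ge \norm{F(\uade)-\vd}_V^p + \alpha r > \Jevd(\uade)$, a contradiction; hence $\Rqe(\ukj) \to \Rqe(\uade)$.

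I do not expect a serious obstacle: the argument is essentially the one in~\cite{Grasmair2008,kaltenbacher2008iterative}, and all needed ingredients ($\D \neq \emptyset$, weak lower semicontinuity of $\Rqe$, weak sequential closedness of $F$, Lemma~\ref{lemma6}) are already in place. The only points requiring a little care are the use of \emph{strong} convergence of the data $v_{k_j}$ when passing to the limit in the discrepancy term — both in the lower-semicontinuity estimate and in the comparison with the fixed competitor $u$ — and, in the last step, the extraction of a further subsequence along which the functional value, the penalty $\Rqe(\ukj)$, and the discrepancy all converge simultaneously, so that the contradiction can be read off.
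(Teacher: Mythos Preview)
Your proposal is correct and follows essentially the same route as the paper: apply Lemma~\ref{lemma6} to extract a weakly convergent subsequence, use weak lower semicontinuity of both terms together with the strong convergence $v_{k_j}\to\vd$ to show the weak limit is a minimizer, and then deduce $\Rqe(\ukj)\to\Rqe(\uade)$ from $\Jevkj(\ukj)\to\Jevd(\uade)$ combined with the separate lower-semicontinuity of the two summands. Your treatment is in fact more explicit than the paper's, both in producing the uniform bound $M$ needed for Lemma~\ref{lemma6} and in spelling out the subsequence-contradiction argument for the penalty convergence, which the paper compresses into a single sentence.
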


\begin{proof}
	Since $u_k$ is a sequence of minimizers of $\Jevk$, it holds that
	$\Jevk(\uk) \leq \Jevk(u)$ for any $u \in \D$. 
	From Lemma \ref{lemma6}, there exists a subsequence $\ukjseq$ weakly converging to some $\ut \in \dom(F)$ such that $F(\ukj) \wconv F(\ut)$. Moreover, from the weak lower semi-continuity of $\norm{\cdot}^p_V$ and $\Rqe$ there holds
	\begin{eqnarray}
	\fl	\qquad \norm{F(\ut) - \vd}^p_V \leq \liminfj \norm{F(\ukj) - \vkj}^p_V \quad \text{ and } \quad \Rqe(\ut)\leq\liminfj \Rqe(\ukj). 
	\end{eqnarray} 
	Combining the above, we get
	\begin{eqnarray}
		\Jevd(\ut) 
		&\leq \liminfj \norm{F(\ukj) - \vkj}^p_V + \alpha \liminfj \Rqe(\ukj) \nonumber \\
		&\leq \liminfj \left\{ \norm{F(\ukj) - \vkj}^p_V + \alpha \Rqe(\ukj) \right\} \nonumber \\
		&= \liminfj \Jevkj(\ukj). \label{ineqAa}
	\end{eqnarray}
	On the other hand, for any $u \in \D$, we see that
	\begin{equation} \label{ineqBb} 
		\Jevd(u) = \limk \Jevk(u) 
		\geq \limsupj \Jevkj(\ukj)
		\geq \liminfj \Jevkj(\ukj).
	\end{equation}
	From \eref{ineqAa} and \eref{ineqBb} we conclude that $\Jevd(\ut) \leq \Jevd(u)$ for any $u \in \D$, that is, $\uade := \ut$ is a minimizer of $\Jevd$. Moreover, the weak lower semi-continuity of $\norm{\cdot}^p_V$ and $\Rqe$ implies that $\Rqe(\ukj) \to \Rqe(\uade)$.
\end{proof}

\begin{remark}
    In~\cite[Proposition 6]{Grasmair2008}, the authors additionally to $\Rq(\ukj) \to \Rq(\uad)$ prove that $\Rq(\ukj - \uad) \to 0$ for their functional $\Rq$. In our case, such a result cannot be inferred as weak convergence is not preserved under the nonlinearity of $d_\ve$. That is, assuming $\uk \wconv \uade$ we cannot prove that $\Rqe(\ukj - \uade) \to 0$. In order to obtain norm convergence, one can further assume $d_\ve(\uk) \wconv d_\ve(\uade)$. However, we choose not to make this additional assumption as it is quite restrictive.
\end{remark}

\begin{theorem}[Weak convergence for fixed $\ve > 0$] \label{ConvergenceFixedve}
	Let $\ve >0$ be fixed. Assume that $F(u)=v$ attains a solution in $\dom(\Rqe)$ and that $\alpha: (0,\infty) \to (0,\infty)$ 
	satisfies \[\alpha(\delta) \to 0 \mathrm{ ~ and ~ } \frac{\delta^p}{\alpha(\delta)} \to 0, \mathrm{ ~ as ~ } \delta \to 0.\] 
	Let $\delta_k \to 0$ and let $\vk \in V$ satisfy $\norm{v - v_k} \leq \delta_k$.
	Moreover, let $\alpha_k := \alpha(\delta_k)$ and \[u_k \in \arg\min \left\{\Jakevk(u): u \in \D\right\}.\] 
	Then, there exist an $\Rqe$-minimizing solution $\ud$ of $F(u)=v$ and a subsequence $\ukjseq$ with $\Rqe(\ukj) \to \Rqe(\ud)$. 
\end{theorem}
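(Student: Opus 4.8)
The plan is to run the classical Tikhonov convergence argument (cf.~\cite{EnglHankeNeubauer1996,Grasmair2008,kaltenbacher2008iterative}), in which the nonlinearity of $d_\ve$ intervenes only through the weak lower semi-continuity and coercivity of $\Rqe$ already recorded above, so that no genuinely new ingredient beyond Lemma~\ref{lemma6} is needed.

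First I fix a solution $\tilde u \in \dom(\Rqe)$ of $F(u)=v$, which exists by hypothesis. Minimality of $\uk$ together with $F(\tilde u)=v$ and the noise bound $\norm{v-\vk}_V \le \delta_k$ gives
\[
    \norm{F(\uk)-\vk}^p_V + \alpha_k\,\Rqe(\uk) \;=\; \Jakevk(\uk) \;\le\; \Jakevk(\tilde u) \;=\; \norm{v-\vk}^p_V + \alpha_k\,\Rqe(\tilde u) \;\le\; \delta_k^p + \alpha_k\,\Rqe(\tilde u).
\]
Dropping the nonnegative regularization term and using $\alpha_k\to 0$ yields $\norm{F(\uk)-\vk}^p_V \le \delta_k^p + \alpha_k\Rqe(\tilde u)\to 0$, whence $F(\uk)\to v$ strongly by the triangle inequality. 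Dividing the same estimate by $\alpha_k$ and using $\delta_k^p/\alpha_k\to 0$ shows that $\big(\Rqe(\uk)\big)_k$ is bounded, with $\Rqe(\uk)\le \delta_k^p/\alpha_k + \Rqe(\tilde u)$.

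Since $\Rqe(\uk)$ and $\norm{F(\uk)-\vk}^p_V$ are bounded and $(\vk)$ converges (hence is bounded), the hypotheses of Lemma~\ref{lemma6} are met, so it provides $\ut\in\dom(F)$ and a subsequence with $\ukj\wconv\ut$ and $F(\ukj)\wconv F(\ut)$. Because $F(\uk)\to v$ strongly, uniqueness of the weak limit forces $F(\ut)=v$, so $\ut$ solves the equation. Weak lower semi-continuity of $\Rqe$ and the bound above then give, along the subsequence,
\[
    \Rqe(\ut) \;\le\; \liminfj \Rqe(\ukj) \;\le\; \limsupj \Rqe(\ukj) \;\le\; \Rqe(\tilde u).
\]
Since $\tilde u$ was an arbitrary solution of $F(u)=v$, this shows $\Rqe(\ut)\le\inf\{\Rqe(u):F(u)=v\}$, i.e.\ $\ut$ is an $\Rqe$-minimizing solution (so, in particular, one exists). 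Setting $\ud:=\ut$ and reusing the displayed chain with $\tilde u$ replaced by $\ud$ collapses it to equalities, which yields $\Rqe(\ukj)\to\Rqe(\ud)$.

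The estimates themselves are routine; the step needing the most care is the compactness step. Lemma~\ref{lemma6} is phrased for a functional with a single fixed regularization parameter, so I must first reduce to that situation, choosing an arbitrary $\alpha>0$ and checking that the boundedness of $\Rqe(\uk)$ and of the discrepancies makes the corresponding functional value bounded; behind this lies the coercivity of $\Rqe$ on $U=L_q(\Omega)$ (via inequality~\eref{ineq2} and the coercivity proposition above), together with the standing assumption that $\Omega$ and $\ve$ be bounded, which is what makes bounded $\Rqe$-sublevel sets weakly sequentially compact, the boundary case $q=1$ being the delicate one. Once the weak limit is in hand, identifying $F(\ut)=v$ by uniqueness of weak limits and closing the $\liminf$/$\limsup$ sandwich against the infimum over all solutions completes the proof.
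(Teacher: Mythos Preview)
Your proof is correct and follows essentially the same route as the paper's: bound the discrepancy and penalty via minimality at a fixed solution $\tilde u$, invoke Lemma~\ref{lemma6} (with a fixed regularization parameter; the paper uses $\alpha_{\max}:=\max_k\alpha_k$, you use an arbitrary $\alpha>0$, which is equivalent since both terms are separately bounded), identify the weak limit as a solution, and close the $\liminf$/$\limsup$ sandwich by specializing $\tilde u$ to the limit itself. The additional commentary in your final paragraph about coercivity and the $q=1$ case is not needed for the argument, since Lemma~\ref{lemma6} already encapsulates the required compactness.
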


\begin{proof}
	Let $\ut \in \dom(\Rqe)$ be any solution of $F(\ut) = v$. 
	From the definition of $\uk$ it follows that
	\begin{eqnarray*}
		\Jevk(\uk) &= \norm{F(\uk) - \vk}^p_V + \alpha_k \Rqe(\uk) 
		%&\clr{\leq \norm{F(\ut) - \vk}^p_V + \alpha_k \Rqe(\ut)} \\
		%&\clr{= \norm{v - \vk}^p_V + \alpha_k \Rqe(\ut) \quad (\text{since } F(\ut) = v \text{ for some } v \in V) }\\
		\leq \delta_k^p + \alpha_k \Rqe(\ut).
	\end{eqnarray*}
	It can be easily seen that $\norm{F(\uk) - \vk}^p_V \leq \Jevk(\uk) \leq \delta_k^p + \alpha_k \Rqe(\ut)$ and together with the assumptions on $\alpha_k$ and $\delta_k$, we conclude that \(\norm{F(\uk) - \vk}_V \to 0\). For the penalty term we have $\Rqe(\uk) \leq \frac{\delta_k^p}{\alpha_k } + \Rqe(\ut)$ which yields
	\begin{equation}\label{eqconv2}
	 	\limsupk \Rqe(\uk) \leq  \Rqe(\ut), 
	\end{equation}
	when using the definition of the limit superior. Let $\alpha_{\max} := \max \{\alpha_k : k \in \N\}$, from the previous inequality there exists $M >0$ such that
	\[\limsupk \left\{\norm{F(\uk) - \vk}^p_V + \alpha_{\max} \Rqe(\uk)\right\} \leq M < \infty, ~ \forall k \in \N.\]
	Therefore, Lemma~\ref{lemma6} guarantees the existence of a subsequence $\ukjseq$ and some $\ud \in \dom(F)$ such that
	$\ukj \wconv \ud$ and $F(\ukj) \wconv F(\ud)$. 
	Since 
	\begin{eqnarray*}
		\fl \qquad \norm{F(\ukj) - v}_V = \norm{F(\ukj) - \vkj + \vkj - v}_V
		\leq \norm{F(\ukj) - \vkj}_V + \norm{\vkj - v}_V \to 0,
	\end{eqnarray*}
	it follows that $\norm{F(\ud) - v}_V = 0$, i.e., $F(\ud) = v$. 
	From the weak lower semi-continuity of $\Rqe$ and the fact that \eref{eqconv2} holds for any $\ut \in \dom(\Rqe)$ solving $F(\ut) = v$, we conclude 
	\begin{eqnarray*}
		\Rqe(\ud) \leq \liminfj \Rqe(\ukj) \leq \limsupj \Rqe(\ukj) \leq \Rqe(\ut).
	\end{eqnarray*}
	This shows that $\ud$ is an $\Rqe$-minimizing solution of $F(u) = v$ and $\Rqe(\ukj) \to \Rqe(\ud)$. 
\end{proof}

%\begin{remark}
%	It is possible to prove the same results using the classical integral definition in the regularization functional instead of using the basis expansion. We can consider $\Rqe(u) = \int_{\Omega} \abs{d_\ve(u(t))}^q dt$ and use this definition when proving lemma \ref{lemma2b} which forms the basis for the proofs that were presented in this section.
%\end{remark}

\subsection{Stability and convergence for vanishing tolerances}

In the previous results we always assumed a positive constant $\ve$. In this section, we consider a nonnegative sequence $\vek$, such that $\vek \to 0$. When the limit point of $\vek$ is $0$, we observe that $d_0(u) = \abs{u}$ gives 
\begin{equation}
    \Rqo(u) %= \int_\Omega \abs{ \max\{\abs{u(x)}, 0\}}^q \dx 
    = \int_\Omega \abs{u(x)}^q \dx =: \mathcal{R}_q(u). \label{ZeroEpsilonYieldsClassicalTK}
\end{equation}
Therefore, we obtain minimizers of the generalized Tikhonov functional. For that reason, the minimizer of $\Jvd(u) := \Jovd(u)$ is denoted by $\uad :=\uado$.

\begin{theorem}[Stability for $\vek \to 0$]\label{th41}
	Assume $\alpha >0 $. Let $\vkseq$ converge to $\vd \in V$, $\{\vek\}_{k \in \N}$ be a tolerance sequence converging to $0$ and let 
	\[ u_k \in \arg\min \left\{ \Jekvk(u) : u \in \D \right\}.\]
	Then, there exist $\{(\vekj, \ukj)\}_{j \in \N}$ and a minimizer $\uad$ of the functional $\Jvd$ such that $\Rq(u_{k_j}-\uad) \to 0$.
\end{theorem}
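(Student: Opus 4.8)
The plan is to run the same scheme as in Theorem~\ref{th2b}, the only new feature being that the penalty itself changes along the sequence; the inequalities \eref{ineq1}--\eref{ineq2} are exactly the tool that bridges the perturbed penalties $\norm{\cdot}_{q,\vek}^q$ with the limiting $\Rq=\norm{\cdot}_{L_q}^q$, and it is again these two inequalities that, at the very end, upgrade weak convergence to the asserted norm convergence. First I would establish boundedness and weak compactness. Fixing any $w\in\D$, minimality of $u_k$ and \eref{ineq1} give $\Jekvk(u_k)\le\Jekvk(w)\le\sup_{k}\norm{F(w)-\vk}_V^p+\alpha\norm{w}_{L_q}^q=:M<\infty$, since $\vk\to\vd$. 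Hence $\{F(u_k)\}$ is bounded in $V$, and by \eref{ineq2} we have $\norm{u_k}_{L_q}\le\norm{u_k}_{q,\vek}+\norm{\vek}_{L_q}\le(M/\alpha)^{1/q}+\sup_k\norm{\vek}_{L_q}<\infty$, so $\{u_k\}$ is bounded in $L_q(\Omega)$. Using reflexivity of $V$ and of $L_q(\Omega)$ (here $1<q\le2$) together with the weak sequential closedness of $F$ (Assumption~\ref{AssumptionOnF}(i)), I extract a subsequence with $\ukj\wconv\ut\in\dom(F)$ and $F(\ukj)\wconv F(\ut)$; this plays the role of Lemma~\ref{lemma6}, with \eref{ineq2} replacing the coercivity of a fixed penalty.

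Next I would show that $\ut$ minimizes $\Jvd$ and hence is the sought $\uad$. Two facts feed the usual $\liminf$/$\limsup$ sandwich. First, for any fixed $u$ the convexity of $t\mapsto t^q$ gives $0\le\Rq(u)-\norm{u}_{q,\vek}^q=\int_\Omega\left(\abs{u(x)}^q-d_{\vek}(u(x))^q\right)\dx\le q\int_\Omega\abs{u(x)}^{q-1}\vek(x)\,\dx\le q\norm{u}_{L_q}^{q-1}\norm{\vek}_{L_q}\to0$ (the last step by Hölder), so $\Jekvk(u)\to\Jvd(u)$. Second, applying \eref{ineq2} along the subsequence and using $\norm{\vekj}_{L_q}\to0$ one gets $\liminfj\norm{\ukj}_{L_q}\le\liminfj\norm{\ukj}_{q,\vekj}$, whence, with weak lower semicontinuity of $\norm{\cdot}_{L_q}$, $\Rq(\ut)\le\liminfj\norm{\ukj}_{q,\vekj}^q$. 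Combining these with weak lower semicontinuity of $\norm{F(\cdot)-\vd}_V^p$ (note $F(\ukj)-\vkj\wconv F(\ut)-\vd$) and the minimality of each $\ukj$, for every $u\in\D$ one obtains $\Jvd(\ut)\le\liminfj(\norm{F(\ukj)-\vkj}_V^p+\alpha\norm{\ukj}_{q,\vekj}^q)\le\limsupj(\norm{F(u)-\vkj}_V^p+\alpha\norm{u}_{q,\vekj}^q)=\Jvd(u)$, so $\uad:=\ut$ is a minimizer of $\Jvd$.

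For the norm convergence I would put $u=\uad$ in the last chain, which forces $\norm{F(\ukj)-\vkj}_V^p+\alpha\norm{\ukj}_{q,\vekj}^q\to\Jvd(\uad)$. The two summands have $\liminf$ no smaller than $\norm{F(\uad)-\vd}_V^p$ and $\alpha\Rq(\uad)$ respectively, while their sum converges to $\norm{F(\uad)-\vd}_V^p+\alpha\Rq(\uad)$; a routine $\limsup$ argument then forces each summand to converge, and in particular $\norm{\ukj}_{q,\vekj}\to\norm{\uad}_{L_q}$. Then \eref{ineq2} with $\norm{\vekj}_{L_q}\to0$ gives $\limsupj\norm{\ukj}_{L_q}\le\norm{\uad}_{L_q}$, while weak lower semicontinuity gives the matching lower bound for the $\liminf$; hence $\norm{\ukj}_{L_q}\to\norm{\uad}_{L_q}$. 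Since $L_q(\Omega)$ is uniformly convex for $1<q\le2$, weak convergence together with convergence of the norms implies strong convergence (the Radon--Riesz property), so $\Rq(\ukj-\uad)=\norm{\ukj-\uad}_{L_q}^q\to0$.

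The only genuinely non-routine step is the last one: as the remark following Theorem~\ref{th2b} observes, the nonlinearity of $d_\varepsilon$ obstructs the naive route to norm convergence, so one must first recover convergence of the \emph{full} $L_q$-norms out of the converging Tikhonov energy, via the additive splitting of the discrepancy and the penalty, and only then invoke the Radon--Riesz property. A secondary point to watch is the endpoint $q=1$, where $L_1(\Omega)$ is neither reflexive nor uniformly convex, so both the weak-compactness argument and the final strong-convergence step would require a separate treatment (or the statement restricted to $1<q\le2$).
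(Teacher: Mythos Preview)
Your argument is correct and follows essentially the same scheme as the paper: weak compactness, a $\liminf/\limsup$ sandwich to identify the weak limit as a minimizer of $\Jvd$, then norm convergence via the Radon--Riesz property. The paper's proof invokes Lemma~\ref{lemma6} directly, passes rather tersely from ``weak lower semi-continuity of $\Rqek$ and $\vek\to0$'' to the key $\liminf$ inequality, and at the end cites \cite[Lemma~2]{Grasmair2008} (which is the Radon--Riesz step) in place of your explicit argument; your version makes these passages precise by using \eref{ineq1}--\eref{ineq2} and the H\"older estimate $\Rq(u)-\norm{u}_{q,\vek}^q\le q\norm{u}_{L_q}^{q-1}\norm{\vek}_{L_q}$, and by first getting $\norm{\ukj}_{q,\vekj}\to\norm{\uad}_{L_q}$ and only then transferring to $\norm{\ukj}_{L_q}$. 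Your observation about the endpoint $q=1$ (failure of reflexivity and of the Radon--Riesz step in $L_1$) is well taken; the paper does not address it.
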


\begin{proof}
	The minimizing property of $\uk$ gives that
    \(\Jekvk(\uk) \leq \Jekvk(u), ~\forall u \in \D \).
	Lemma~\ref{lemma6}, guarantees the existence of a subsequence of $\uk$, denoted by $\ukjseq$, which converges to some $\ut \in \dom(F)$ and is such that $F(\ukj) \wconv F(\ut)$.
	From the weak lower semi-continuity of $\norm{\cdot}^p_V$ and $\Rqek$ and the fact that $\vek \to 0$, we have that
	\begin{eqnarray}\label{inequalityA}
	\fl \qquad	\Jovd(\ut) \leq \liminfj \norm{F(\ukj) - \vkj}^p_V + \alpha \liminfj \Rqekj(\ukj) \leq \liminfj \Jekjvkj(\ukj). 
	\end{eqnarray}
	On the other hand, since $\vek \to 0$, for any $u \in \dom(F)$ we have 
	\begin{eqnarray*}
	    \fl \qquad \Jovd(u) = \limk \Jekvk(u) \geq \limsupj \Jekjvkj(\ukj) \geq \liminfj \Jekjvkj(\ukj)	\overset{\eref{inequalityA}}{\geq} \Jovd(\ut). 
	\end{eqnarray*}
	Hence, based on the notation in~\eref{ZeroEpsilonYieldsClassicalTK}, we obtain \[\Jvd(u) =: \Jovd(u) \geq \Jovd(\ut) := \Jvd(\ut),\] 
	for all $u \in \dom(F)$, implying that $\uad := \ut$ is a minimizer of $\Jvd$. Moreover, $\Jvd(\ukj) \to \Jvd(\uad)$ and due to the fact that both $\norm{\cdot}^p$ and $\Rq$ are weakly lower semi-continuous, it follows that $\Rq(\ukj) \to \Rq(\uad)$. Then, with the use of~\cite[Lemma 2]{Grasmair2008} we conclude that $\Rq(\ukj - \ud) \to 0.$
\end{proof}

\begin{theorem}[Convergence for $\vek \to 0$] \label{Convergenceth5}
	Let $\{\vek\}_{k \in \N}$ be a tolerance sequence converging to $0$. We assume that $F(u)=v$ attains a solution in $\dom(\Rqek)$ and that $\alpha: (0,\infty) \to (0,\infty)$ 
	satisfies \[\alpha(\delta) \to 0 \text{ and } \frac{\delta^p}{\alpha(\delta)} \to 0, \text{ as } \delta \to 0.\] 
	Let $\delta_k \to 0$ and let $\vk \in V$ satisfy $\norm{v - v_k} \leq \delta_k$.
	Moreover, let $\alpha_k = \alpha(\delta_k)$ and \[u_k \in \arg\min \left\{\Jakekvk(u): u \in \D\right\}.\] 
	Then, there exist an $\Rq$-minimizing solution $\ud$ of $F(u)=v$ and a subsequence $\ukjseq$ with $\Rq(\ukj - \ud) \to 0$. 
\end{theorem}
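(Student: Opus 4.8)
The plan is to combine the a-priori estimate technique of Theorem~\ref{Convergenceth5}'s two-functional companions (Theorems~\ref{ConvergenceFixedve} and~\ref{Convergenceth5}) with the vanishing-tolerance argument already used in Theorem~\ref{th41}. First I would fix an arbitrary solution $\ut \in \dom(\Rq)$ of $F(\ut) = v$; by the hypothesis that $F(u)=v$ attains a solution in $\dom(\Rqek)$ for each $k$, and since $\Rqek(u) \le \Rq(u)$ by~\eref{ineq1}, such a $\ut$ lies in every $\dom(\Rqek)$. The minimizing property of $u_k$ for $\Jakekvk$ then gives
\[
	\norm{F(\uk) - \vk}^p_V + \alpha_k \Rqekv{k}(\uk) \;\le\; \delta_k^p + \alpha_k \Rqekv{k}(\ut) \;\le\; \delta_k^p + \alpha_k \Rq(\ut),
\]
where the last step again uses~\eref{ineq1}. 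From here the assumptions $\alpha_k \to 0$ and $\delta_k^p/\alpha_k \to 0$ yield $\norm{F(\uk) - \vk}_V \to 0$ and $\limsup_k \Rqekv{k}(\uk) \le \Rq(\ut)$, and since $\Rqekv{k}(\uk)$ is uniformly bounded, Lemma~\ref{lemma6} (applied with $\alpha_{\max}$ in place of $\alpha$, exactly as in Theorem~\ref{ConvergenceFixedve}) produces a subsequence $\ukjseq$ and some $\ud \in \dom(F)$ with $\ukj \wconv \ud$ and $F(\ukj) \wconv F(\ud)$. The triangle-inequality step $\norm{F(\ukj) - v}_V \le \norm{F(\ukj) - \vkj}_V + \norm{\vkj - v}_V \to 0$ then forces $F(\ud) = v$.

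Next I would upgrade the weak limit to an $\Rq$-minimizing solution. The key is to pass to the limit in the penalty. Using~\eref{ineq1} and~\eref{ineq2} with the tolerance $\vekj$, we have $\Rq(\ukj)^{1/q} \le \Rqekjv{j}(\ukj)^{1/q} + \norm{\vekj}_{L_q}$; combined with $\vekj \to 0$ in $L_q(\Omega)$ and $\limsup_j \Rqekjv{j}(\ukj) \le \Rq(\ut)$, this gives $\limsup_j \Rq(\ukj) \le \Rq(\ut)$. Then the weak lower semicontinuity of $\Rq$ (the classical $q$-norm functional) yields
\[
	\Rq(\ud) \;\le\; \liminfj \Rq(\ukj) \;\le\; \limsupj \Rq(\ukj) \;\le\; \Rq(\ut).
\]
Since $\ut$ was an arbitrary solution of $F(u)=v$ in $\dom(\Rq)$, this shows $\ud$ is an $\Rq$-minimizing solution, and moreover $\Rq(\ukj) \to \Rq(\ud)$. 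Finally, from $\ukj \wconv \ud$ together with $\Rq(\ukj) \to \Rq(\ud)$ and the fact that $L_q(\Omega)$ for $1 < q \le 2$ is uniformly convex (or, for $q=1$, invoking~\cite[Lemma 2]{Grasmair2008} as in Theorem~\ref{th41}), I would conclude $\Rq(\ukj - \ud) \to 0$, i.e.\ norm convergence of the subsequence.

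The main obstacle, as flagged in the remark after Theorem~\ref{th2b}, is that the nonlinearity $d_\ve$ does not interact well with weak convergence, so one cannot argue directly with $\Rqek$ in the semicontinuity step. The device that circumvents this is precisely the sandwich~\eref{ineq1}--\eref{ineq2}: it lets one trade the $\vek$-insensitive penalty for the plain $L_q$-norm up to an error $\norm{\vek}_{L_q} \to 0$, after which all the limiting arguments are carried out with the well-behaved convex functional $\Rq$. Care is needed only in checking that the error terms genuinely vanish in $L_q(\Omega)$ (which is where boundedness of $\Omega$ and the assumption $\vek \in L_q(\Omega)$, $\vek \to 0$, enter) and that $\ut$ simultaneously lies in all the domains $\dom(\Rqek)$, both of which follow from~\eref{ineq1}.
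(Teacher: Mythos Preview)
Your proposal is correct and follows essentially the same route as the paper: the minimizing property yields $\norm{F(\uk)-\vk}_V \to 0$ and a uniform bound on $\Rqek(\uk)$, Lemma~\ref{lemma6} extracts a weakly convergent subsequence with $F(\ud)=v$, a sandwich argument together with weak lower semicontinuity identifies $\ud$ as $\Rq$-minimizing with $\Rq(\ukj)\to\Rq(\ud)$, and \cite[Lemma~2]{Grasmair2008} gives $\Rq(\ukj-\ud)\to 0$. Your explicit invocation of~\eref{ineq2} to pass from $\Rqekj(\ukj)$ to $\Rq(\ukj)$ \emph{before} applying weak lower semicontinuity of $\Rq$ is a minor presentational variant of the paper's liminf step, and arguably cleaner since the functional $\Rqekj$ changes with $j$.
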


\begin{proof}
	%Without loss of generality, we can assume that $\vek \to 0$ with the same rate as $\delta_k \to 0$.
	Let $\ut \in \dom(\Rqek)$ be any solution of $F(\ut) = v$. The minimizing property of $\uk$ implies
	\begin{eqnarray*}
		\Jakekvk (\uk) \leq \Jakekvk(\ut) &= \norm{F(\ut) - \vk }^p_V + \alpha_k \Rqek(\ut) \\ 
		&=\norm{v - \vk }^p_V + \alpha_k \Rqek(\ut) \\ 
		&\leq \delta_k^p + \alpha_k \Rqek(\ut). 
	\end{eqnarray*}
	Therefore, it follows that \(\norm{F(\uk) - \vk}^p_V \leq \Jakekvk (\uk) \leq  \delta_k^p + \alpha_k \Rqek(\ut)\). Then, taking the limit for $k \to \infty$ yields $\norm{F(\uk) - \vk}^p_V \to 0$ since we assumed that $\alpha_k \to 0$ and $\delta_k \to 0$ as $k \to \infty$. In a similar way, for the penalty term we have 
	\[\alpha_k \Rqek(\uk) \leq \Jakekvk (\uk) \leq  \delta_k^p + \alpha_k \Rqek(\ut),\] that is $\Rqek(\uk) \leq \frac{\delta_k^p}{\alpha_k } + \Rqek(\ut)$. 
	Taking the limit superior as $k \to \infty$ we obtain 
	\begin{equation}
		\limsupk \Rqek(\uk) \leq \limsupk \left\{\frac{\delta_k^p}{\alpha_k } + \Rqek(\ut) \right\} = \Rqo(\ut), \label{inequalityB}
	\end{equation} which is true for any solution $\ut$ of $F(\ut) = v$. \\
	With $\alpha_1 := \max\{\alpha_k : k \in \N\}$ and the previous calculation, there exists a constant $M >0$ such that  
	\[\limsupk \left\{\norm{F(\uk) - \vk}^p_V + \alpha_1 \Rqek(\uk) \right\} \leq M < \infty, ~ \forall k \in \N.\]
	From Lemma \ref{lemma6}, there exists a subsequence $\ukjseq$ weakly convergent to some $\ud \in \dom(F)$ such that $F(\ukj) \wconv F(\ud)$. Since 
	\begin{eqnarray}
	    \norm{F(\ukj) - v}^p &= \norm{F(\ukj) - \vkj + \vkj - v}^p \nonumber \\
	    &\leq 2^{p-1} \left(\norm{F(\ukj) - \vkj}^p + \norm{\vkj - v}^p \right) \to 0 
	\end{eqnarray}
	it follows that $F(\ud) = v.$ \newline
	From the weak lower semi-continuity of $\Rqek$, the fact that $\vek \to 0$ and \eref{inequalityB}, we obtain that
	\[\Rqo(\ud) \leq \liminfj \Rqekj(\ukj) \leq \limsupj \Rqekj(\ukj) \leq \Rqo(\ut),\] for all $\ut$ such that $F(\ut) = v$.
	Using the notation in \eref{ZeroEpsilonYieldsClassicalTK}, we conclude that $\Rq(\ud) \leq \Rq(\ut)$, for all $\ut$ such that $ F(\ut) = v.$ Hence, $\ud$ is an $\Rq$-minimizing solution of $F(u) = v$. Due to $\ukj \wconv \ud$ and the fact that $\Rq(\ukj) \to \Rq(\ud)$, and using~\cite[Lemma 2]{Grasmair2008}, we further conclude that $\Rq(\ukj - \ud) \to 0.$
\end{proof}

\section{Convergence rates}

In this section we present results on the convergence rates of minimizers of the functional~\eref{TikhonovRe}. Since we assume the parameter space to be a Banach space, we adopt the standard approach in Banach space settings and use the Bregman distance to estimate the difference between the regularized solution $\uade$ and the ground truth $\ud$. Some standard results on convergence rates are found in~\cite{Burger_2004,Engl_1989,Grasmair2008,JinMaass,Lorenz_2008}, while in~\cite{Grasmair_2010,Hofmann_et_al_2007,Schuster2012} exist convergence rates results using the Bregman distance. Moreover, for estimating the distance between $F(\uade)$ and $\vd$, we use the usual norm of the Banach space $V$.

The definition of the Bregman distance for $\Rqe$ requires the subdifferential of the functional $\Rqe: L_q(\Omega) \to \R$ at an element $u\in L_q(\Omega)$, which is given by
\begin{equation*}
 \fl \qquad \partial \Rqe(u) := \left\{ z \in L_q(\Omega)^* ~ : ~ \forall w \in L_q(\Omega) \quad \Rqe(w) \geq \Rqe(u) + {\inner{z,w-u}}_{{L_q(\Omega)}^*\times L_q(\Omega)}\right\},
\end{equation*}
where $L_q(\Omega)^*$ denotes the dual space of $L_q(\Omega)$ and  ${\inner{\cdot,\cdot}}_{{L_q(\Omega)}^*\times L_q(\Omega)}$ the dual pairing between  $L_q(\Omega)^*$ and $L_q(\Omega)$. 
%For $1\leq q \leq 2$ the subdifferential is given by 
%\begin{equation}
%    \partial \mathcal{R}_{q,\ve} (u) = q \norm{\de(u)}^{q-1} \; \partial \de(u) 
%\end{equation}
%using the chain rule for subdifferentials. 
Particularly for (finite) $n$-dimensional problems, like the numerical example presented in the next section, the $L_{q,\ve}$-insensitive measure appearing in the regularization functional is defined by $\norm{u}_{q,\ve}  = \norm{\de(u)}_q = {\left(\sum_{i=1}^{n} \abs{\de(u_i)}^q\right)}^{1/q} $. Using the classical subdifferential rules, for $q=1$ we compute the subdifferential
\begin{equation}
    \partial \mathcal{R}_{1,\ve} (u) 
    = \partial \norm{\de(u)}_1 = \partial \sum_{i=1}^{n}  \abs{\de(u_i)} = \sum_{i=1}^{n} \partial \abs{\de(u_i)} 
    %= \sum_{i=1}^{n} \Sign(\de(u_i)) \; \partial \de(u_i)
\end{equation}
with $i$-th sum component given by
\begin{equation}\label{Eq:Subdiff_Q1}
   \partial \abs{\de(u_i)} =
   \cases{\{-1\} &if $u_i <-\ve$ \\ 
   [-1,0] &if $u_i = -\ve$ \\ 
   \{0\} &if $\abs{u_i}<\ve$ \\ 
   [0,1] &if $u_i = \ve$ \\ 
   \{1\} &if $u_i> \ve$}.
\end{equation}
Similarly, for $q=2$ we have
\begin{equation}
    \partial \mathcal{R}_{2,\ve} (u) =\partial \norm{\de(u)}_2^2 
    = \partial \sum_{i=1}^n \abs{\de(u_i)}^2
    = \sum_{i=1}^n \partial \abs{\de(u_i)}^2
\end{equation}
with $i$-th sum component computed as
\begin{equation}
      \partial \abs{\de(u_i)}^2 = 
      %2 \de(u_i) \; \partial \de(u_i) = 
     \cases{u_i + \ve &if $u_i<-\ve$ \\ 
     0 &if $\abs{u_i}\leq \ve$ \\
     u_i - \ve &if $u_i>\ve$}.
\end{equation}
Note that the tolerance function is applied in a component wise sense for computing the above subdifferentials. 
%Moreover, in our computations the subdifferential $\partial \de(u_i)$ defined in~\eref{E:pointwise_tol_subdiff} has been used.
The previous computations are confirmed in the subdifferential's formula for $1 \leq q \leq 2$
\begin{equation}
    \partial \Rqe(u) %= \partial \sum_{i=1}^{n} \abs{\de(u_i)}^q 
    = \sum_{i=1}^n \partial \abs{\de(u_i)}^q = q \abs{\de(u_i)}^{q-1} \partial \abs{\de(u_i)}
\end{equation}
where $\partial \abs{\de(u_i)}$ is determined by~\eref{Eq:Subdiff_Q1}.

It is worth noting that if the tolerance $\ve$ is not scalar but it is given as a vector with positive entries, then instead of $\de(u_i)$ there will be $d_{\ve_i}(u_i) = \max\{\abs{u_i} - \ve_i, 0\}$ in all of the above calculations. Given the subdifferential of $\Rqe$, we proceed with the Bregman distance and the convergence rates.

\begin{definition}[Bregman distance]\label{BregmanForRqe}
    Let $\ve >0$. Also, let $\Rqe : L_q(\Omega) \to \R \cup \{\infty\}$ be a convex and proper functional with subdifferential $\partial \Rqe \subset {L_q(\Omega)}^*$. Considering an element $\xi \in \partial \Rqe(u)$, %\subset {L_q(\Omega)}^*$,
    the Bregman distance of $\Rqe$ at $u \in L_q(\Omega)$ is defined by 
    \begin{eqnarray}
       D_{\xi}^\ve(\tilde{u}, u) := \Rqe(\tilde{u}) - \Rqe(u) - {\inner{\xi,\tilde{u} - u}}_{{L_q(\Omega)}^*\times L_q(\Omega)},
    \end{eqnarray}
    for $\tilde{u} \in L_q(\Omega)$ and it is only defined in the Bregman domain \[\mathcal{D}_B^\ve(\Rqe) := \{u \in \dom(\Rqe) ~:~ \partial \Rqe(u) \neq \emptyset\}.\]
\end{definition}

For notational simplicity, we use the usual inner product notation $\inner{\cdot,\cdot}$ for the dual pairing. Since we work in Banach spaces, there should not be any confusion with the notation of inner products in Hilbert spaces. Moreover, when writing $\alpha \sim \delta^{s}$ for $\alpha: (0,\infty) \to (0,\infty)$ and $s>0$, we mean that there exist $C \geq c > 0$ and $\delta_0>0$ such that $c \delta^s \leq \alpha(\delta) \leq C \delta^s$ for $0 < \delta < \delta_0$. 

The classical process for proving convergence rates requires an additional assumption on the smoothness of $F$ (restriction of its nonlinearity), as well as a source condition (in~\cite{HofmannYamamoto2010,Scherzer2008} general source conditions are discussed) which allows the estimation of the duality pairing appearing in the Bregman distance. Both are included in the following assumption.

\begin{assumption}[Smoothness of $F$ and source condition]
\label{A:assumptionForRates} Assume that the following hold:
 	\begin{enumerate}
	    \item The operator $F$ is G{\^a}teaux differentiable at $\ud$ and $F'$ denotes its G{\^a}teaux derivative.
        \item There exists a constant $\gamma >0$, such that 
        \[\norm{F(u) - F(\ud) - F'(\ud)(u - \ud)} \leq \gamma D_{\xi}^\ve(u,\ud)\] for all $u \in \dom(F) \cap \mathcal{B}_\rho(\ud)$, with a sufficiently large $\rho$.
	    \item There exists $w \in V$, such that $\xi = {F'(\ud)}^* w$ with $\gamma \norm{w} <1$.
    \end{enumerate}
\end{assumption}

\begin{theorem}(Convergence rates)\label{ConvRatesTikhonovTol}
    Let $\ve > 0$, $1 \leq p,q \leq 2$. Moreover, we consider that Assumptions~\ref{AssumptionOnF} and ~\ref{A:assumptionForRates} hold. Assume noisy data $\vd \in V$ such that $\norm{v -\vd}_V \leq \delta$ and that there exists an $\Rqe$-minimizing solution $\ud$ of \eref{invpb}, in the Bregman domain $\mathcal{D}_B^\ve$. For the minimizer $\uade$ of \eref{TikhonovRe},
    we prove the following estimates:
    \begin{itemize}
        \item[] If $p=1$ and $\alpha \beta_2 < 1$,
        \[\norm{F(\uade) - \vd}_V\leq\frac{(1 + \alpha \beta_2) \delta}{1 - \alpha \beta_2} \quad \text{ and } \quad D_\xi^\ve(\uade,\ud)\leq\frac{\delta + \alpha \beta_2 \delta}{\alpha (1 - \beta_1)}.\]
        \item[] If $p>1$,
        \begin{eqnarray*}
            \norm{F(\uade) - \vd}^p_V&\leq p_* \delta^p + p_* \alpha \beta_2 \delta + {(\alpha \beta_2)}^{p_*} \quad \text{ and} \\
            \quad \quad D_\xi^\ve(\uade, \ud)&\leq\frac{\delta^p + \alpha \beta_2 \delta + {{(\alpha \beta_2)}^{p_*}}/{p_*}}{\alpha (1 - \beta_1)},
        \end{eqnarray*}
        with $p_*$ being the conjugate of $p$ such that $1/p+1/p_* = 1$.
    \end{itemize}
    Moreover, we have:
     \begin{enumerate}
        \item[] For $p=1$ and the choice $\alpha \sim \delta^{s}$ with fixed $0<s<1$
        \begin{eqnarray*}
            \norm{F(\uade) - \vd}_V  = \mathcal{O}(\delta) \quad \text{ and } \quad
            D_\xi^\ve(\uade,\ud) = \mathcal{O}(\delta^{1-s}).
        \end{eqnarray*}
        \item[] For $p>1$ and the choice $\alpha \sim \delta^{p-1}$
         \begin{eqnarray*}
            \norm{F(\uade) - \vd}_V  = \mathcal{O}(\delta) \quad \text{ and } \quad
            D_\xi^\ve(\uade,\ud) = \mathcal{O}(\delta).
        \end{eqnarray*}
    \end{enumerate}
\end{theorem}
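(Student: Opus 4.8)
The plan is to follow the classical Banach-space convergence-rate scheme (as in~\cite{Grasmair2008,Schuster2012}), using that the only structural property of $\Rqe$ we need is convexity together with a subgradient $\xi \in \partial\Rqe(\ud)$ satisfying the source condition of Assumption~\ref{A:assumptionForRates}. First I would write down the minimality inequality for $\uade$, tested against the $\Rqe$-minimizing solution $\ud$: since $F(\ud)=v$ and $\norm{v-\vd}_V\le\delta$,
\[
\norm{F(\uade)-\vd}_V^p + \alpha\Rqe(\uade) \;\le\; \norm{F(\ud)-\vd}_V^p + \alpha\Rqe(\ud) \;\le\; \delta^p + \alpha\Rqe(\ud).
\]
Next I would substitute the Bregman identity $\Rqe(\uade)=\Rqe(\ud)+\inner{\xi,\uade-\ud}+D_\xi^\ve(\uade,\ud)$, which cancels the $\alpha\Rqe(\ud)$ terms and leaves
\[
\norm{F(\uade)-\vd}_V^p + \alpha D_\xi^\ve(\uade,\ud) \;\le\; \delta^p - \alpha\inner{\xi,\uade-\ud}.
\]

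The heart of the argument is the estimate of the duality pairing. Using the source representation $\xi=F'(\ud)^*w$ I would move the pairing to $V$, namely $\inner{\xi,\uade-\ud}=\inner{w,F'(\ud)(\uade-\ud)}$, add and subtract $F(\uade)-F(\ud)$, and combine $F(\ud)=v$, the triangle inequality $\norm{F(\uade)-v}_V\le\norm{F(\uade)-\vd}_V+\delta$, and the nonlinearity condition of Assumption~\ref{A:assumptionForRates}(2) to get
\[
\bigl|\inner{\xi,\uade-\ud}\bigr| \;\le\; \gamma\norm{w}\,D_\xi^\ve(\uade,\ud) + \norm{w}\,\norm{F(\uade)-\vd}_V + \norm{w}\,\delta.
\]
With the shorthand $\beta_1:=\gamma\norm{w}$ (so $\beta_1<1$) and $\beta_2:=\norm{w}$, plugging this back and collecting the Bregman terms on the left yields the master inequality
\[
\norm{F(\uade)-\vd}_V^p + \alpha(1-\beta_1)\,D_\xi^\ve(\uade,\ud) \;\le\; \delta^p + \alpha\beta_2\norm{F(\uade)-\vd}_V + \alpha\beta_2\delta.
\]

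From here both families of estimates follow by elementary manipulation. For $p=1$, move $\alpha\beta_2\norm{F(\uade)-\vd}_V$ to the left-hand side; under $\alpha\beta_2<1$ the two remaining left-hand terms are nonnegative, so discarding one at a time gives the bounds on $\norm{F(\uade)-\vd}_V$ and on $D_\xi^\ve(\uade,\ud)$. For $p>1$, apply Young's inequality $\alpha\beta_2\norm{F(\uade)-\vd}_V\le \tfrac1p\norm{F(\uade)-\vd}_V^p+\tfrac1{p_*}(\alpha\beta_2)^{p_*}$ to absorb the linear residual into the $p$-th power term (with a residual coefficient $1/p_*$), and again discard either the Bregman or the residual term. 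The asymptotic rates then follow by inserting the prescribed choices: for $p=1$, $\alpha\sim\delta^s$ makes the prefactor $(1+\alpha\beta_2)/(1-\alpha\beta_2)\to1$, so the residual is $\mathcal O(\delta)$ and $D_\xi^\ve(\uade,\ud)=\mathcal O(\delta/\alpha)=\mathcal O(\delta^{1-s})$; for $p>1$, the identity $(p-1)p_*=p$ shows that both $\alpha\beta_2\delta$ and $(\alpha\beta_2)^{p_*}$ are $\mathcal O(\delta^p)$ when $\alpha\sim\delta^{p-1}$, giving $\mathcal O(\delta)$ for the residual and $D_\xi^\ve(\uade,\ud)=\mathcal O(\delta^p/\alpha)=\mathcal O(\delta)$.

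I expect the one genuinely delicate point — everything else being bookkeeping with the definitions of $\beta_1,\beta_2$ and Young's inequality — to be the rigorous applicability of the nonlinearity condition, which is posed only on $\dom(F)\cap\mathcal B_\rho(\ud)$: one has to ensure $\uade$ lies in this ball. This can be handled by choosing $\rho$ sufficiently large, exploiting the a priori bound $\Rqe(\uade)\le \delta^p/\alpha + \Rqe(\ud)$ (obtained exactly as in the proof of Theorem~\ref{ConvergenceFixedve}) together with coercivity of $\Rqe$, or by restricting $\delta$ to a sufficiently small range; with this in place the pairing estimate above is legitimate and the proof goes through.
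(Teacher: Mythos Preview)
Your proposal is correct and follows essentially the same route as the paper's own proof: minimality at $\ud$, the Bregman identity, the source condition $\xi=F'(\ud)^*w$, the add-and-subtract plus nonlinearity bound to reach the master inequality with $\beta_1=\gamma\norm{w}$, $\beta_2=\norm{w}$, and then the $p=1$ rearrangement versus the $p>1$ Young-inequality split. Your added remark on ensuring $\uade\in\mathcal B_\rho(\ud)$ is a point the paper simply absorbs into the phrase ``sufficiently large $\rho$''.
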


\begin{proof}
    We start by comparing the functional values $\Jevd(\uade)$ and $\Jevd(\ud)$. From the minimizing property of $\uade$, we obtain
    \begin{eqnarray*} 
        \fl \qquad
        \norm{F(\uade) - \vd}^p_V + \alpha \norm{\uade - u^*}_{q,\ve}^q &\leq 
        \norm{F(\ud) - \vd}^p_V + \alpha \norm{\ud - u^*}_{q,\ve}^q \\ 
        &\leq \delta^p + \alpha \norm{\ud - u^*}_{q,\ve}^q.
    \end{eqnarray*}
    Then, by reordering and gathering terms we use the Bregman distance $D_\xi^\ve(\uade, \ud)$, which yields 
    \begin{eqnarray*} 
        \norm{F(\uade) - \vd}^p_V + \alpha D_\xi^\ve(\uade, \ud) \leq \delta^p - \alpha \inner{\xi, \uade - \ud}.
    \end{eqnarray*}
    In the next step we employ the source condition~{\it(iii)} of Assumption~\ref{A:assumptionForRates} for rewriting the last term, which results into
    \begin{eqnarray} \label{MainEstim}
        \norm{F(\uade) - \vd}^p_V + \alpha D_\xi^\ve(\uade, \ud) &\leq 
        \delta^p - \alpha \inner{w, F'(\ud)(\uade - \ud)}.
    \end{eqnarray}
    Now, we focus on the dual pairing of the last term, for which we have
    \begin{eqnarray*}
        -\inner{w, F'(\ud)(\uade - \ud)} &=& \inner{w,-F'(\ud)(\uade - \ud)} \nonumber \\
        &\leq& \norm{w} \norm{-F'(\ud)(\uade - \ud)}.
    \end{eqnarray*}
    Adding and subtracting $F(\ud)- F(\uade)$ inside the last term and using the triangle inequality, yields 
    \begin{eqnarray*}
        -\inner{w, F'(\ud)(\uade - \ud)} &\leq& \norm{w} \norm{F(\uade) - F(\ud) - F'(\ud)(\uade - \ud)}  
        \nonumber \\ 
        &&+ \norm{w} \norm{F(\ud) - F(\uade)}.
    \end{eqnarray*}
    Furthermore, we use the smoothness assumption of $F$ defined in~{\it(ii)} of Assumption~\ref{A:assumptionForRates} to write
    \begin{eqnarray*}
        -\inner{w, F'(\ud)(\uade - \ud)} 
        &\leq& \gamma \norm{w} D_\xi^\ve(\uade, \ud) + \norm{w} \norm{F(\ud) - F(\uade)}
    \end{eqnarray*}
    and by defining constants $\beta_1,\beta_2>0$ such that $\beta_1 := \gamma \norm{w} < 1$ and $\beta_2 := \norm{w}$, we further obtain
    \begin{eqnarray}\label{innerProductEstim}
        -\inner{w, F'(\ud)(\uade - \ud)} \leq \beta_1 D_\xi^\ve(\uade, \ud) + \beta_2 \norm{F(\ud) - F(\uade)}.
    \end{eqnarray}
    In addition, we can estimate the term $\norm{F(\ud) - F(\uade)}$. We add and subtract $\vd \in V$ and use the triangle inequality to conclude
    \begin{eqnarray}\label{TriangleEstim}
       %\fl \quad 
       \norm{F(\ud) - F(\uade)}_V 
       %\leq \norm{F(\ud) - \vd} + \norm{F(\uade) - \vd} 
       \leq \delta + \norm{F(\uade) - \vd}_V.
    \end{eqnarray}
    Substituting the estimates \eref{innerProductEstim}, \eref{TriangleEstim} into \eref{MainEstim}, we have
    \begin{eqnarray}\label{eq:PreEstimate}
        \fl \quad
        \norm{F(\uade) - \vd}^p_V + \alpha D_\xi^\ve(\uade, \ud) &\leq&  
        \frac{\delta^p}{p} + \alpha \beta_1 D_\xi^\ve(\uade, \ud) + \alpha \beta_2 \delta %\nonumber \\ 
        %&&
        + \alpha \beta_2 \norm{F(\uade) - \vd}_V.
    \end{eqnarray}
    %Gathering terms together, we have
    %\begin{eqnarray*}
    %    \fl \quad
    %    \frac{1}{p} \norm{F(\uade) - \vd}^p + 
    %    \alpha (1 -\gamma \norm{w}) D_\xi(\uade, \ud) \leq  
    %    \frac{\delta^p}{p} + \alpha \norm{w} \delta + \alpha \norm{w}\norm{\vd - F(\uade)}.
    %\end{eqnarray*}
    For $p=1$, rearranging \eref{eq:PreEstimate} yields
    \begin{eqnarray*}
       (1 - \alpha \beta_2) \norm{F(\uade) - \vd}_V + \alpha (1 - \beta_1) D_\xi^\ve(\uade, \ud) \leq \delta + \alpha \beta_2 \delta.
    \end{eqnarray*}
    For sufficiently small $\alpha>0$ such that $\alpha \beta_2<1$, the first term is nonnegative. Moreover, the second term is nonnegative by assumption since $\beta_1 < 1$. Therefore, we can derive the following estimates
    \begin{eqnarray*}
        \norm{F(\uade) - \vd}_V \leq \frac{(1 + \alpha \beta_2) \delta}{1 - \alpha \beta_2} ~ \text{ and } ~  D_\xi^\ve(\uade,\ud) \leq \frac{\delta + \alpha \beta_2 \delta}{\alpha (1 - \beta_1)}.
    \end{eqnarray*}
    Choosing $\alpha \sim \delta^s$ with fixed $0 < s <1$, we obtain 
    \[ \norm{F(\uade) - \vd}_V  = \mathcal{O}(\delta) \quad \text{ and } \quad
       D_\xi^\ve(\uade,\ud) = \mathcal{O}(\delta^{1-s}).\]
    For $p > 1$, we have
    \begin{eqnarray*}
        \norm{F(\uade) - \vd}^p_V &-& \alpha \beta_2 \norm{F(\uade) - \vd}_V + \alpha D_\xi^\ve(\uade, \ud) \\
        &&\leq \delta^p + \alpha \beta_1 D_\xi^\ve(\uade, \ud) + \alpha \beta_2 \delta.
    \end{eqnarray*}
    Applying Young's inequality $ab \leq {a^p}/p + {b^{p_*}}/{p_*}, \text{ for } 1/p+1/{p_*} = 1$ with $a = \norm{F(\uade) - \vd}_V$ and $b = \alpha \beta_2 p$, yields
    \begin{eqnarray*}
      \fl \qquad 
      \left(1 - \frac{1}{p} \right)\norm{F(\uade) - \vd}^p_V + \alpha (1 - \beta_1) D_\xi^\ve(\uade, \ud) \leq \delta^p + \alpha \beta_2 \delta + \frac{{(\alpha \beta_2)}^{p_*}}{p_*}.
    \end{eqnarray*} 
    Both terms on the left hand side of the last inequality are nonnegative. Therefore, by neglecting the other nonnegative term, respectively, we conclude the following estimates
    \begin{eqnarray*}
        \norm{F(\uade) - \vd}^p_V 
        %&\leq \frac{\delta^p + \alpha \beta_2 \delta p + \frac{{(\alpha \beta_2p)}^{p_*}}{p_*}}{1 - \frac{1}{p}} =
        &\leq p_* \delta^p + p_* \alpha \beta_2 \delta + {(\alpha \beta_2)}^{p_*} \quad \text{ and} \\
        \qquad D_\xi^\ve(\uade, \ud)&\leq \frac{\delta^p + \alpha \beta_2 \delta + {{(\alpha \beta_2)}^{p_*}}/{p_*}}{\alpha (1 - \beta_1)}.
    \end{eqnarray*}
    The choice $\alpha \sim \delta^{p-1}$ yields    
     \(\norm{F(\uade) - \vd}_V  = \mathcal{O}(\delta) ~ \text{ and } ~ D_\xi^\ve(\uade,\ud) = \mathcal{O}(\delta)\).
\end{proof}

Except the convergence rates in the Bregman distance, one could also derive an estimate in the $L_{q,\ve}$-insensitive measure by using the inequality \eref{ineq1}. In the case of $q = 2$, i.e., in Hilbert space setting, we have the following remark. 

\begin{remark}
    For $q=2$ and classical penalty $\mathcal{R}_2(u) = \norm{u - u^*}^2_2$ we can transfer the estimates from the Bregman distance to the usual norm, see~\cite{Scherzer2008,JinMaass}. When including tolerances in the regularization term, such an equivalence does not hold true anymore. As previously stated, it is possible to use inequality~\eref{ineq1} to estimate 
    \[\norm{\uade - \ud}_{2,\ve}^2 \leq \norm{\uade - \ud}_2^2 = D_\xi(\uade, \ud),\]
    where the Bregman distance in the last expression is the one calculated for regularization without tolerances. However, an estimate relating $D_\xi^\ve(\uade,\ud)$ to $\norm{\uade - \ud}_{2,\ve}^2$ is not obvious. 
\end{remark}

%%% The following remark doesn't fit here, thus it is omitted
%\begin{remark}[Estimate in the $\ve$-insensitive measure]
%    In the usual Hilbert space norm we can use the triangle inequality to estimate the total error $\norm{\uade - \ud}_2$ in terms of the approximation error $\norm{u_{\alpha,\ve} - \ud}_2$ and the data error $\norm{\uade - u_\alpha}_2$, assuming that $u_{\alpha,\ve}$ is the minimizer of the noise-free Tikhonov functional. Due to the fact that the $\ve$-insensitive measure doesn't satisfy the triangle inequality, we don't have an analogous estimate. However, through the inequality \eref{ineq1} we get an estimate of the total error in the $\ve$-insensitive measure, given by
%\[\norm{\uade - \ud}_{2,\ve} \leq \norm{\uade - \ud}_2 \leq \norm{\uade - u_{\alpha,\ve}}_2 + \norm{u_{\alpha,\ve} - \ud}_2.\]
%\end{remark}

\section{Numerical consideration}

For the numerical minimization of our functional, we use a subgradient algorithm introduced in~\cite[Algorithm 1]{Gralla2020}. The suggested method is a subgradient algorithm with adaptive decreasing step size. The authors in this article prove the stability and convergence of the algorithm and motivate its effectiveness by comparing their numerical results to those of other existing methods. In their examples the authors consider engineering applications for denoising and deblurring of 1D and 2D signals. For further details on the algorithm, we refer the reader to~\cite{Gralla2020}.

%\subsection{Academic example}

For illustrating the effect of tolerances in the solution when minimizing the altered Tikhonov functional, we present an example of noisy data differentiation. In the sequel, we consider the problem $Ku=v$ with linear integral operator $K : L_2(0,1) \to L_2(0,1)$ defined by 
\begin{equation}
    Ku(x) = \int_0^x u(s)~\ds, \quad u: [0,1] ~ \to ~ \mathbb{R}.
\end{equation}
We consider as reference solution the function $u^* = \sin(2\pi x)$ for $ x \in [0,1]$ and we wish to approximate the true solution $u^\dagger$ which is assumed to lie within the tolerance area around $u^*$ defined as 
\begin{equation}\label{ToleranceTube}
    \mathrm{Tube}_\ve(u^*) := u^* \pm \ve, \quad \text{ for } \ve >0.
\end{equation}
In our example we take $\ve$ to be a positive scalar but it could also be assumed as a nonnegative real function. The noisy data are then created as $v^\delta = Ku^\dagger + n(\delta)$, for a certain noise level $\delta >0$. In the results that follow we minimize the functional 
\begin{equation}\label{E:MinimizationFunctonial_Tol}
    \Jevd (u) = \frac{1}{2}\norm{Ku-\vd}^2 + \frac{\alpha}{q} \norm{u - u^*}^q_{q,\ve}, \quad q \in \{1,2\}, ~\ve \geq 0.
\end{equation}

We discretize the operator on the grid $x_i = (i - {\frac{1}{2}})h$, for $i = 1, \dots, N$ and $h = \frac{1}{N}$ with $N=600$ discretization points. This yields an $N\times N$ matrix (which, for simplicity, we denote again by $K$) with the following structure

\[K = h \left(
    \begin{array}{cccc}
        \frac{1}{2} & 0 & \ldots & 0 \\
        1 & \frac{1}{2} & \ddots & \vdots \\
        \vdots & \ddots & \ddots & 0 \\
        1 & \ldots & 1 & \frac{1}{2}  
    \end{array} 
    \right).\] 

In \Fref{F2a}, we show the true solution $\ud$, the reference solution $u^*$ and the tolerance area considered for $\ve = 0.3$. In \Fref{F2b}, we plot the true and noisy data created for $\delta = 0.001$. We compare the regularized solution $\uade$ obtained from the minimization of \eref{E:MinimizationFunctonial_Tol}, to the solution $\uad$ of the generalized Tikhonov functional. In \Fref{F3}, we compare the reconstructions $\uade, \uad$ to the true solution $\ud$ in the following two cases: for $\alpha = 0.001$ and $q=1$ (\Fref{F3a}) and for $\alpha = 0.01$ and $q = 2$ (\Fref{F3b}). In both cases $\uade$ has been computed with $\ve = 0.3$.

\begin{figure}
	\begin{center}		
		\subfloat[]
		{\label{F2a}
		\includegraphics[width=0.45\textwidth]{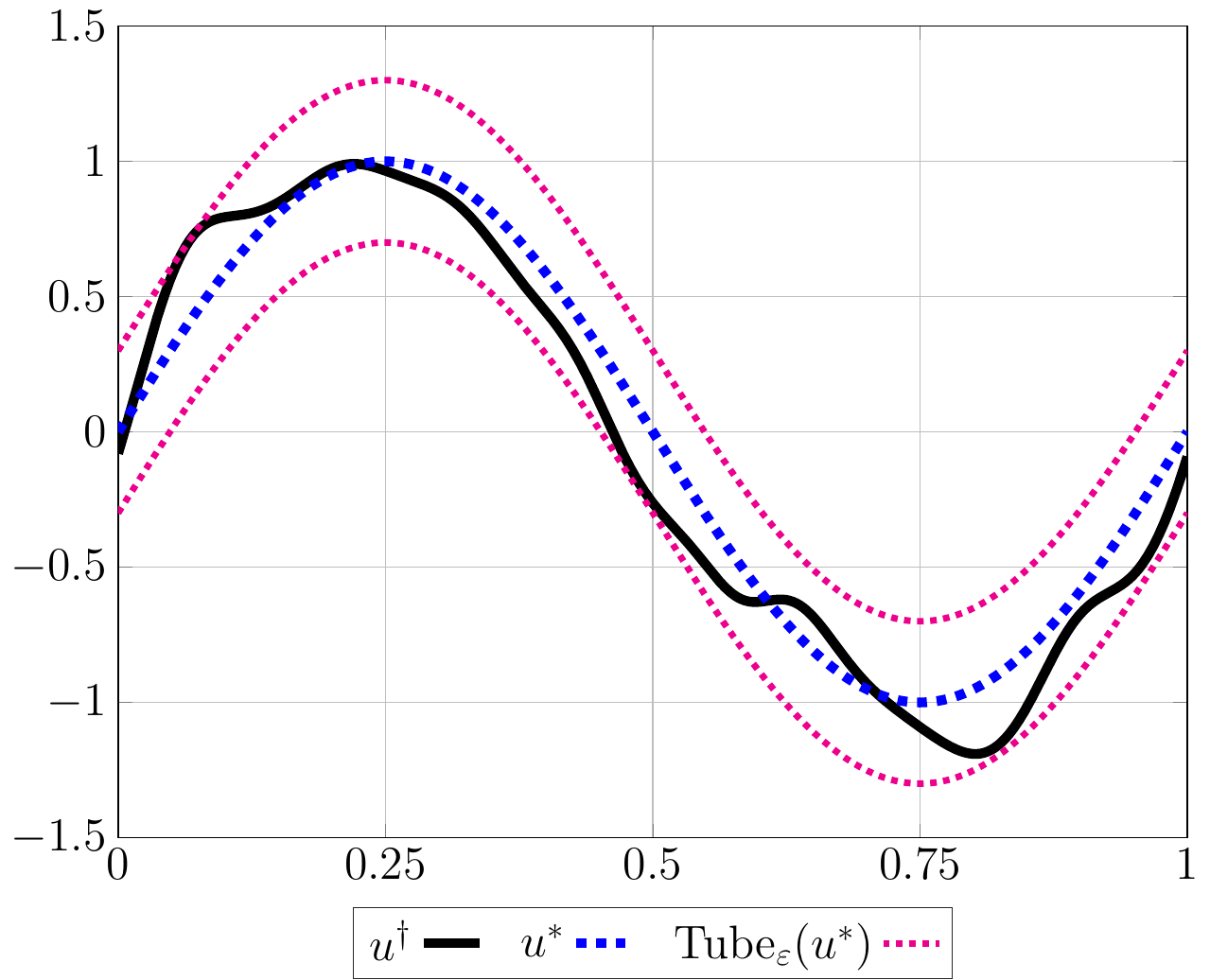}}
		\hspace{2pt}
		\subfloat[]
		{\label{F2b}
		\includegraphics[width=0.45\textwidth]{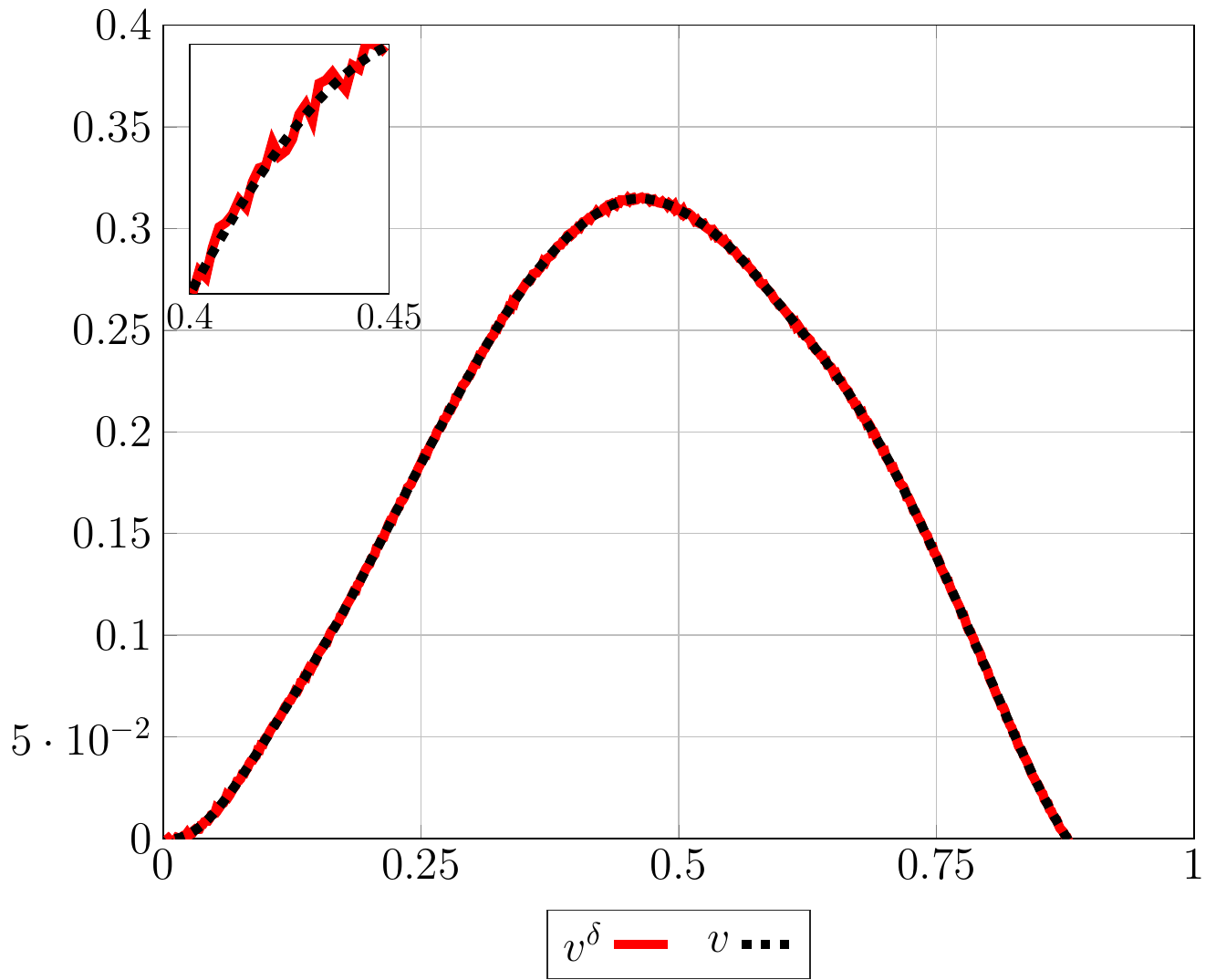}}
	\end{center}
	\caption{The true $\ud$ and reference solution $u^*$ together with the defined tolerance area for $\ve = 0.3$ are shown in (a). The true data $v = K u^\dagger$ and noisy data $\vd = v + n(\delta)$ created with $\delta = 0.001$ are shown in (b).}
\end{figure}

\begin{figure}
	\begin{center}
		\subfloat[]
		{\label{F3a}
		\includegraphics[width=0.45\textwidth]{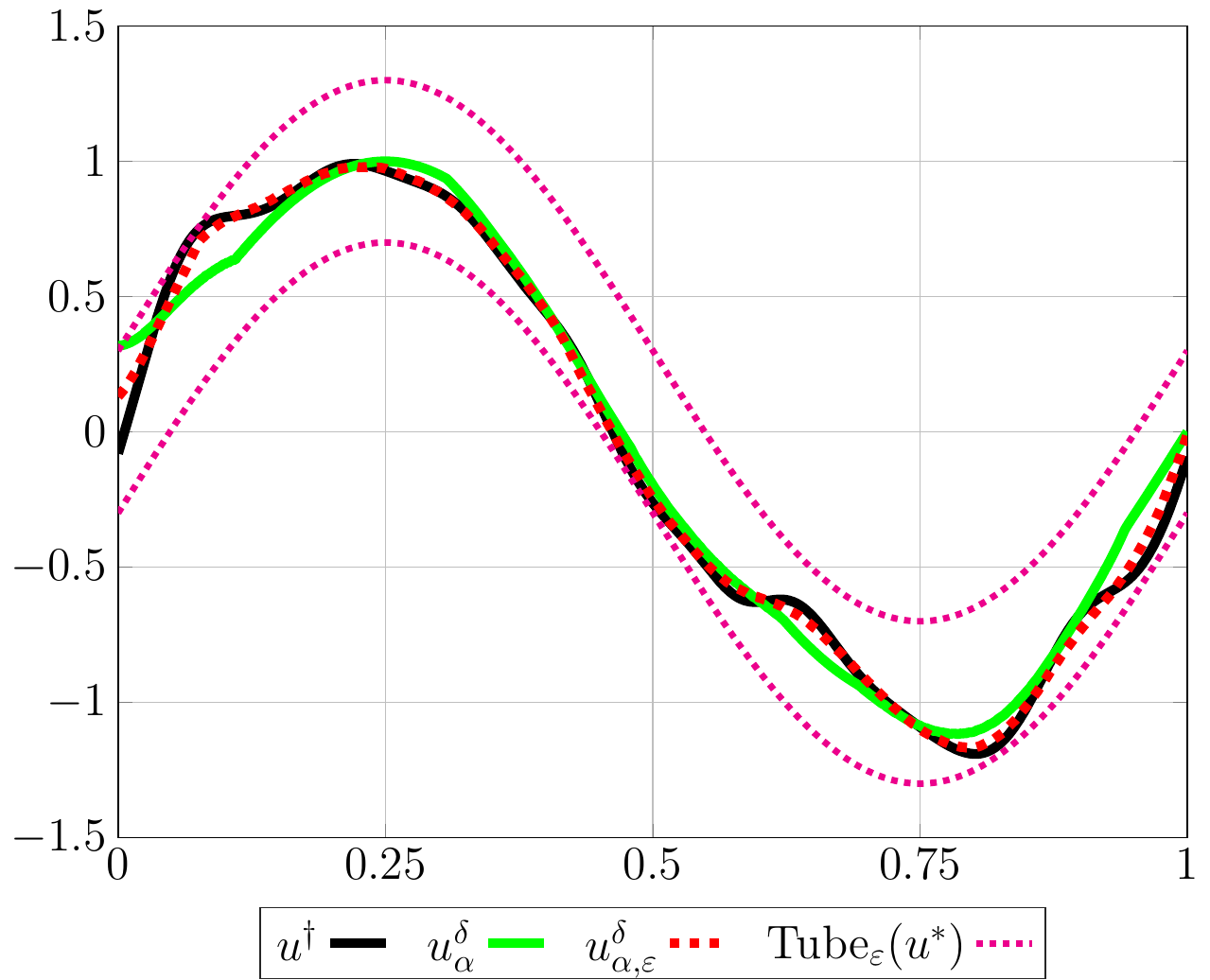}}
		\hspace{2pt}
		\subfloat[]
		{\label{F3b}
		\includegraphics[width=0.45\textwidth]{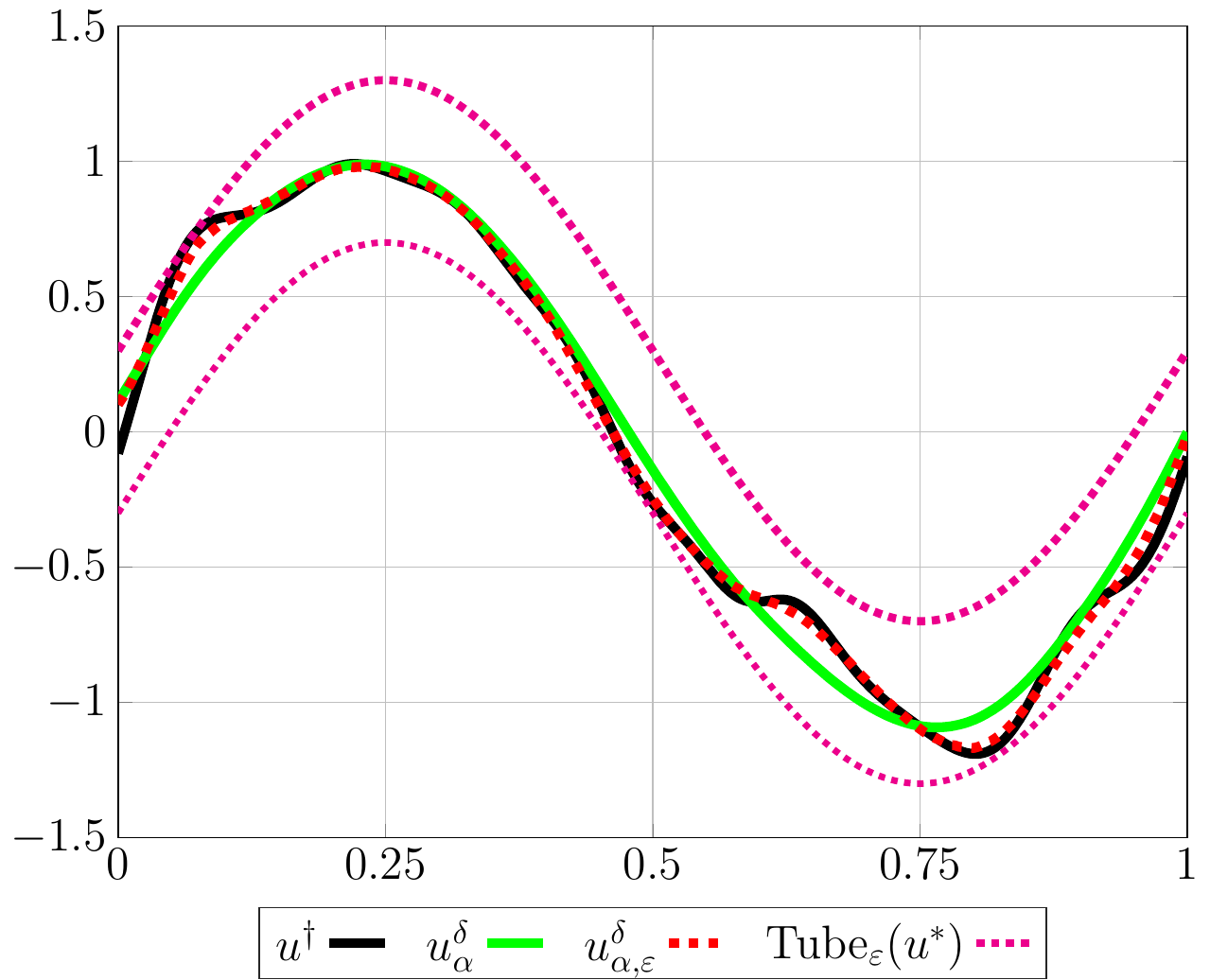}}
	\end{center}
	\caption{Comparison of the reconstructions $\uade$ and $\uad$ to the ground truth $\ud$. In both cases the chosen tolerance is $\ve = 0.3$. In (a) we use $\alpha = 0.001$ and $q=1$ and in (b) $\alpha =0.01$ and $q=2$.}
	\label{F3}
\end{figure}

\Fref{F3} shows that it is possible to obtain better reconstructions than those of the generalized Tikhonov regularization. Intuitively, the tolerances in the penalty term can be interpreted as further regularization of the solution. Their effect, however, depends on the choice of the regularization parameter as for a very small $\alpha$ the influence of tolerances can be insignificant.

\subsection{Error behavior}

\begin{wrapfigure}[19]{o}{0.45\textwidth}       
    %%[Zeilen]{Position}[Ueberhang]{Breite}
    \includegraphics[width=0.45\textwidth]{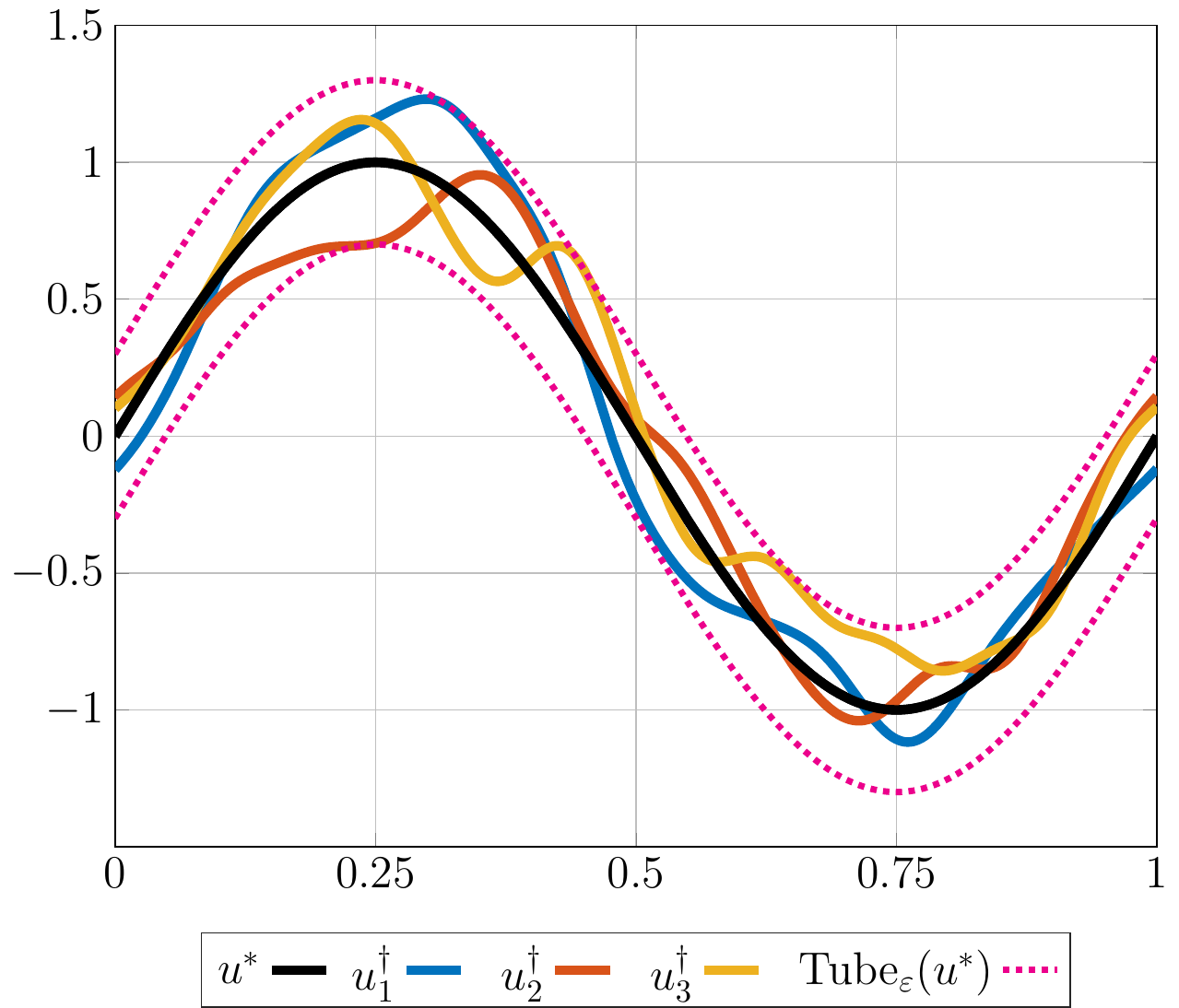} 
    \caption{Example of $u_i$ for $i= 1,2,3$ with $u^* = \sin(2\pi x)$ and $\ve = 0.3$.}
    \label{F:figure4}
\end{wrapfigure}
Now, we examine the behavior of the approximation error for different tolerances. We assume the same reference solution $u^*$ and we use various values of $\ve \in [0,1.2]$. For each value of $\ve$ we define the corresponding tolerance area $\mathrm{Tube}_\ve(u^*)$ as in~\eref{ToleranceTube} and we perform $N=50$ simulations over which we calculate the mean approximation error $\norm{\uade - \ud}_{q,\ve}$. Moreover, we denote the true solution by $u_i^\dagger$, where $i=1,\ldots,N$ is the index of the $i$-th run. Each $u_i^\dagger$ is generated as a random and smooth perturbation of $u^*$ inside the tolerance area $\mathrm{Tube}_\ve(u^*)$. Therefore, the ground truth and the noisy data are computed as
\begin{eqnarray}
    u_i^\dagger &= u^* + \ve \eta_i \in \mathrm{Tube}_\ve(u^*), \label{E:Smooth_perturbation} \\
    v_i^{\delta} &= Ku_i + n(\delta),
\end{eqnarray} 
for a random (but smooth) $\eta_i$ and $i = 1,\dots N$. The smooth perturbation within the tolerance area is created by convolution of a normally distributed, random vector with the zero-mean Gaussian distribution with standard deviation $\sigma=0.08$. 
This random vector is further weighted by the tolerance value $\ve$ and then added to $u^*$ as done in~\eref{E:Smooth_perturbation}.
Note that the noise level $\delta = 0.005$ and regularization parameter $\alpha = 0.001$ are kept the unchanged throughout all simulations as we only examine the resulting error for different $\ve \in [0,1.2]$. \Fref{F:figure4} is an illustration of the $u_i^\dagger$ created for $i = 1, 2, 3$. 

\begin{figure}%[H]
	\begin{center}		
		\subfloat[]{%\label{F4a}
		\includegraphics[width=0.45\textwidth]{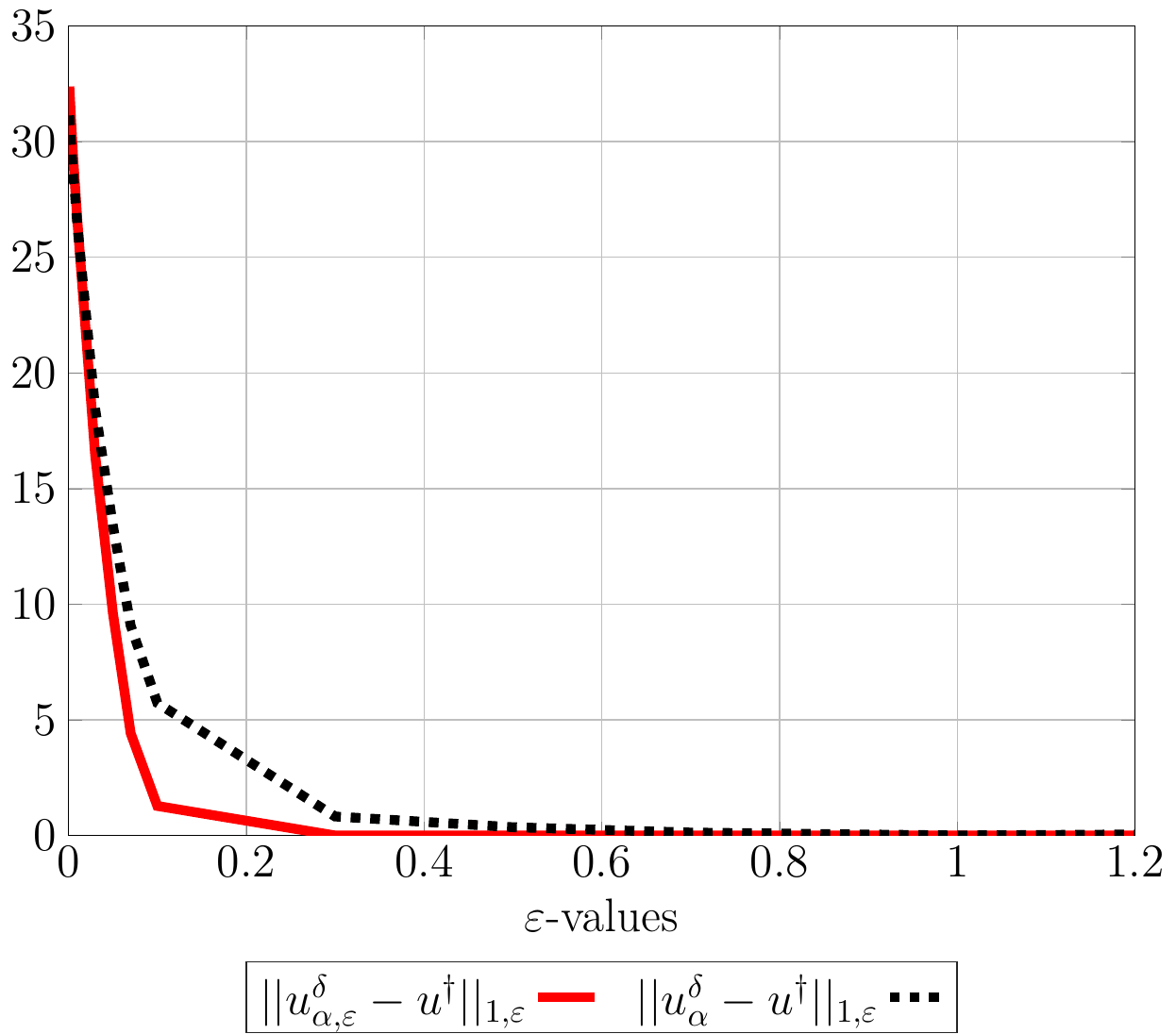}}
		\hspace{2pt}
		\subfloat[]{%\label{F4b}
		\includegraphics[width=0.45\textwidth]{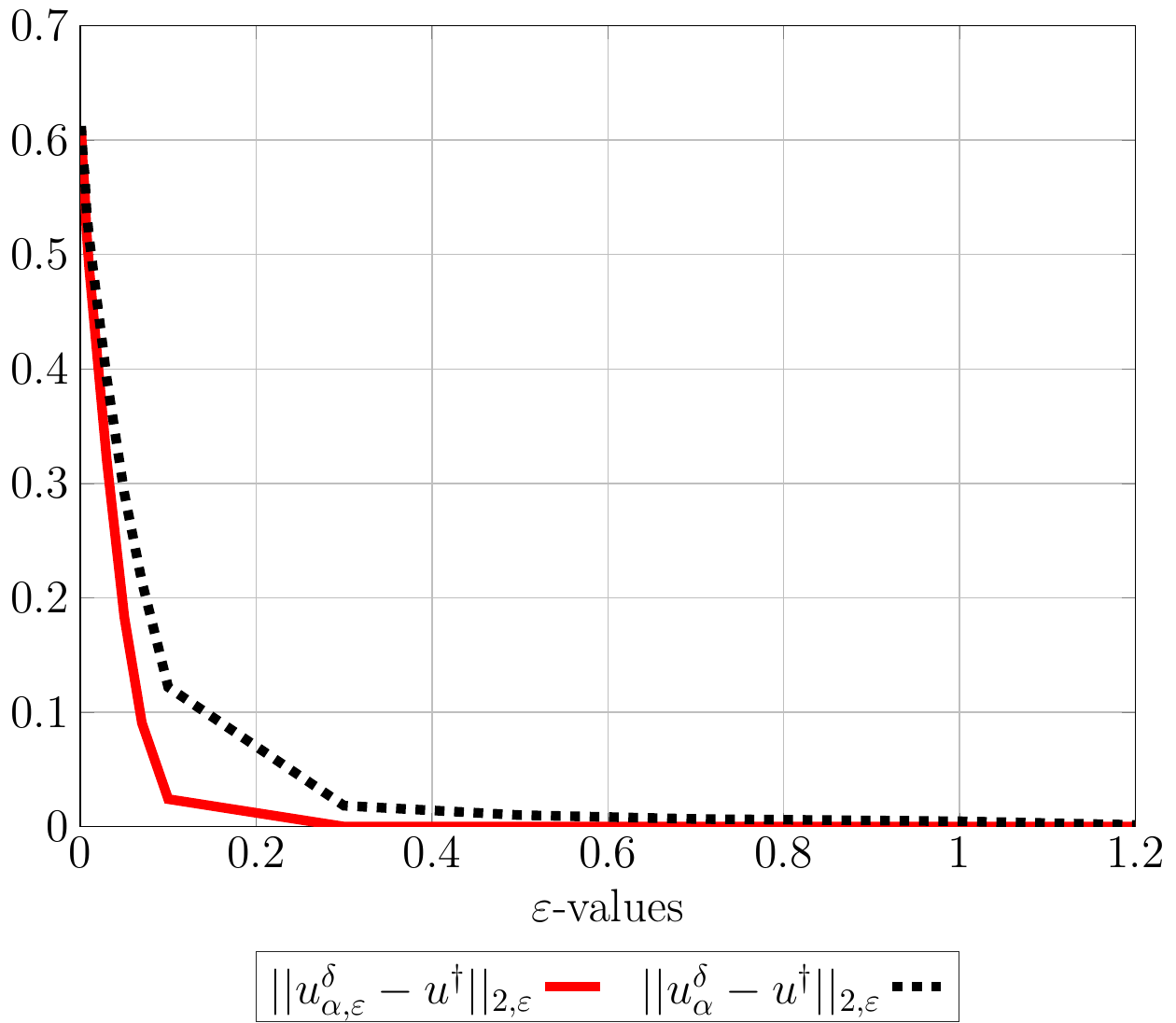}}
		%%FIGURENAMES: overall_error_delta=0.005_alpha=0.001_Nts=50_q= 1 & 2 %%mean data noise bound was mean||v - vd|| = 0.0707.
	\end{center}
	\caption{The mean approximation error computed over 50 runs. In both cases, we use regularization parameter $\alpha = 0.001$, noise level $\delta = 0.005$ and tolerance values $\ve \in [0,1.2]$. The error is computed using the $\ve$-insensitive measure for $q=1$ in (a) and $q=2$ in (b).}
	\label{F:figure5}
\end{figure}
\Fref{F:figure5} shows the error between the true solution $\ud$ and the reconstructions $\uad$, $\uade$ calculated in the $\ve$-insensitive measure for different values of $\ve$. In (a) we assume $q=1$ and in (b) $q=2$. Both plots reveal that the error obtained from our approach (red solid line) is smaller (or, at worst, equal) than the error calculated for the generalized Tikhonov minimizers (black dashed line). In addition, the use of the $\ve$-insensitive measure ensures that our reconstructions remain within the prescribed tolerance area.

\subsection{Choosing the regularization parameter}
%\subsection{Choice of \texorpdfstring{$\alpha$}{\textalpha} and \texorpdfstring{$\varepsilon$}{\textepsilon}}

The potential for obtaining better results when using tolerances takes us to the step of examining how we can enhance the quality of our reconstructions. An important task in Tikhonov regularization is the choice of the regularization parameter $\alpha>0$. Since we include tolerances in the regularization, we seek their effect on the solution and whether $\ve$ needs to be chosen according to the value of $\alpha$. In some applications an indication for the appropriate size of tolerances may exist, meaning that their value cannot be arbitrarily large. In that case, we only deal with finding the optimal value for $\alpha$ by using existing parameter choice strategies, see for instance~\cite{Anzengruber_Ramlau2009,Ito_Jin_Zou_2011}. However, when both $\alpha$ and $\ve$ need to be tuned, one can think of ways to combine them for improving the final solution.

The L-curve method is often used to gain an insight on the optimal value of $\alpha$, its use for the numerical solution of inverse problems is discussed in~\cite{Hansen2000}. It is created by plotting the discrepancy norm $\norm{K\uade - \vd}_2$ against the norm of the regularized solution $\norm{\uade-u^*}_{q,\ve}$ for different values of $\alpha$. The L-curve shows the trade-off between the fit to the given data and the size of the regularized solution, and the optimal value of $\alpha$ is found near the maximum curvature. 

In~\Fref{F:figure6} the L-curve for $q =1$ (left) and $q=2$ (right) for five different values of $\varepsilon$ and for $\alpha \in [10^{-12},1]$ is shown. In order to compare the L-curves in a similar scale, we assume $u^* = \sin(2\pi x)$ and the true solution is taken as a smooth perturbation inside the tolerance area for the fixed value $\ve^\dagger=0.5$. This is used in all simulations for different values of $\ve$. In this figure one can observe the different scaling (the L-curve is shifted down for larger $\ve$) due to the value of the corresponding tolerance. Moreover, when $\ve$ and $\alpha$ both become larger, the regularization term (y-axis of the L-curve) goes faster to $0$. In both cases the L-curve is not sharp as normally expected in the classical Tikhonov regularization. The nature of the tolerance function $\de$ indicates that the connection between $\alpha$ and $\ve$, and, in particular, their optimal combination, is not straightforward through the L-curve. Therefore, one has to look for more sophisticated parameter choice rules.

\begin{figure}%[H]
   \centering
    \includegraphics[width=0.9\textwidth]{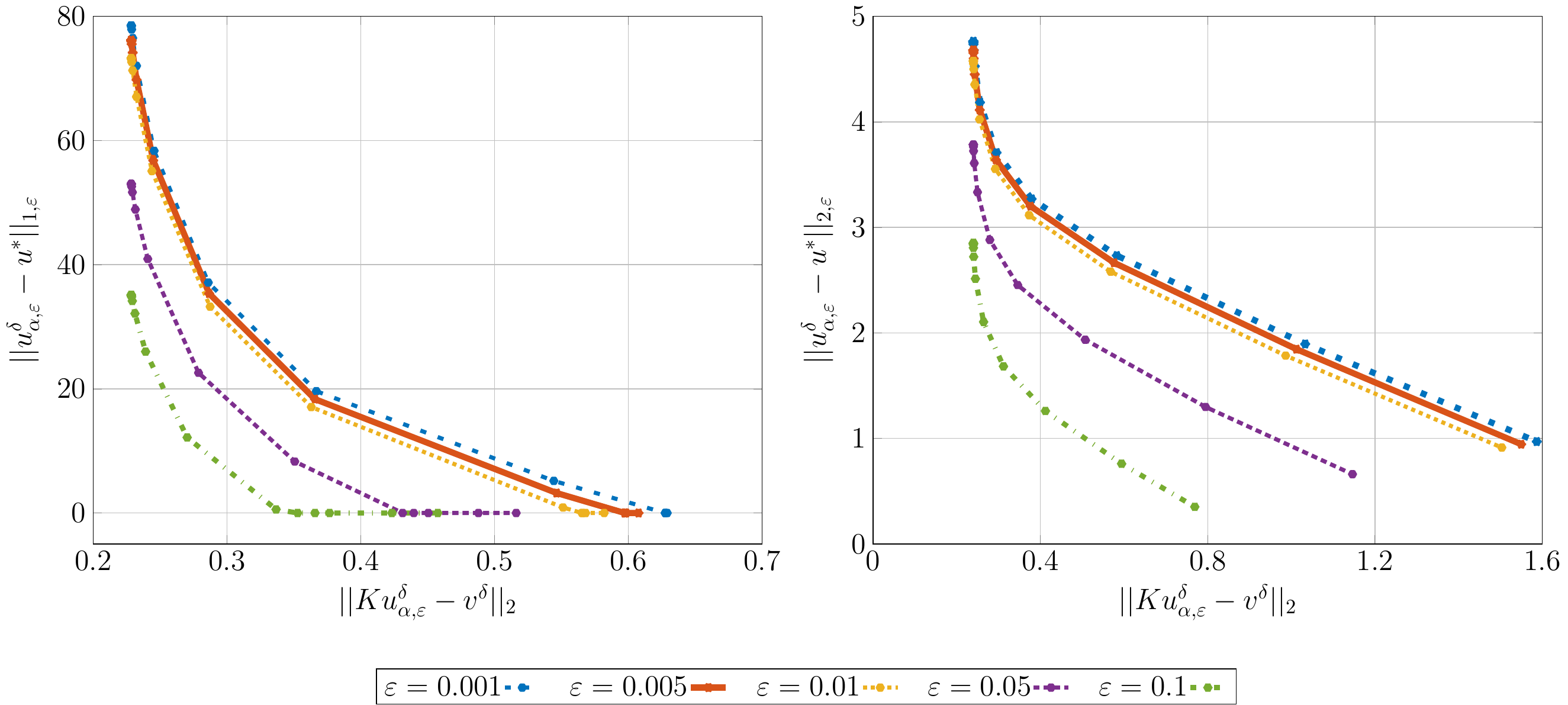}
    \caption{L-curve for different values of tolerance $\varepsilon$. On the left we have the results for $q=1$, on the right for $q=2$. In both cases we used $p=2$, $\delta = 0.015$, and $\alpha \in [10^{-12},1]$.}
    \label{F:figure6}
\end{figure}

A different method, which is based on the noise level in the data, is the so-called Morozov's discrepancy principle~\cite{Bonesky_2008,Morozov1966}. Given an estimate of the noise level, the idea of the discrepancy principle is to accept reconstructions which create measurements with the similar error as the one in the noisy data. This simply translates into choosing the maximum $\alpha>0$ such that 
\begin{equation}\label{E:discrepancy_principle}
    G(u_{{\alpha}}^\delta) := \norm{K u_\alpha^\delta - \vd} \leq \tau \delta, \quad \text{ for } ~ \uad := \arg\min \Jvd(u),
\end{equation}
with $\tau \geq1$ and an estimate of the noise level $\delta>0$. Here, we use the discrepancy principle for identifying the optimal regularization parameter $\alpha_{\mathrm{opt}}$ when minimizing the generalized Tikhonov functional $\Jvd$ (no tolerance assumption). Then, we use it as regularization parameter in our functional $\Jevd$. That is, we compare the optimal reconstruction of the generalized Tikhonov to the minimizer which we compute incorporating tolerances in the regularization. In the following figures we show these results for the discrepancy principle given as in \eref{E:discrepancy_principle} with $\tau = 2$, and we examine the cases $q = 1$ (in~\Fref{F:figure7}) and $q=2$ (in~\Fref{F:figure9}) for the penalty norm.

\begin{figure}%[H]
    \centering
    \includegraphics[width=0.7\textwidth]{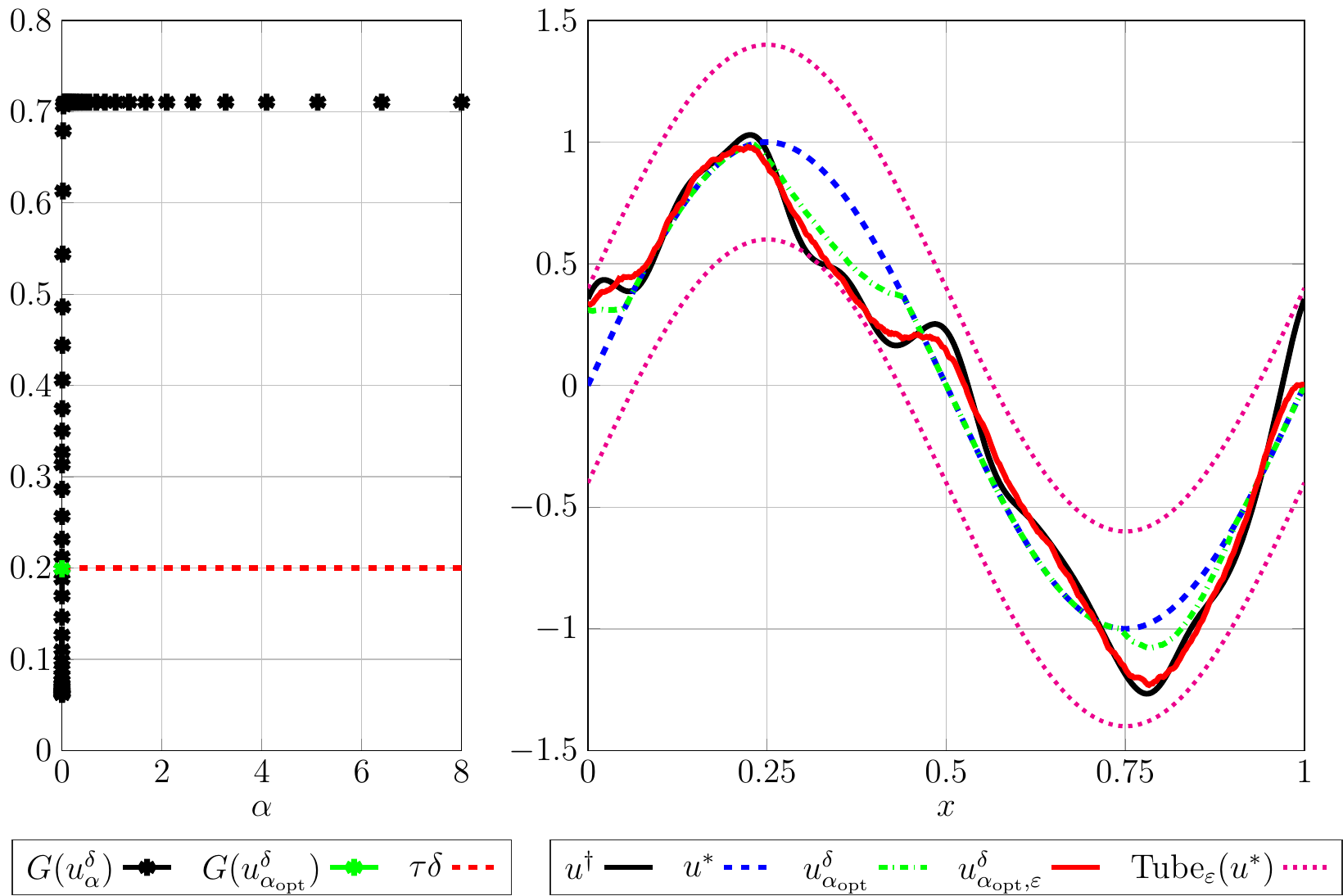}
    \caption{Morozov's discrepancy principle for $\delta=0.1$, $\tau=2$, $\ve=0.4$ and $q=1$. On the left, the value of $G(\uad)$ is plotted for different $\alpha$ and for the $\alpha_{\mathrm{opt}}=0.0011$ (in green). On the right, we plot the reconstructions $u_{\alpha_{\mathrm{opt}},\ve}^\delta$, $u_{\alpha_{\mathrm{opt}}}^\delta$ calculated with $\alpha_{\mathrm{opt}}$ and compare them to $\ud$ and $u^*$.}
    %%FIGURENAME:morozov_STANDALONE_FIGURE_for_N=600_q=1_delta=0.1_tau=2_qq=0.8_a0=10_epsi=0.4_alpha_opt=0.0010634_selected_index=41_dataerrorPercentage=1.3089
    \label{F:figure7}
\end{figure}

\begin{figure}%[H]
    \centering
    \includegraphics[width=0.7\textwidth]{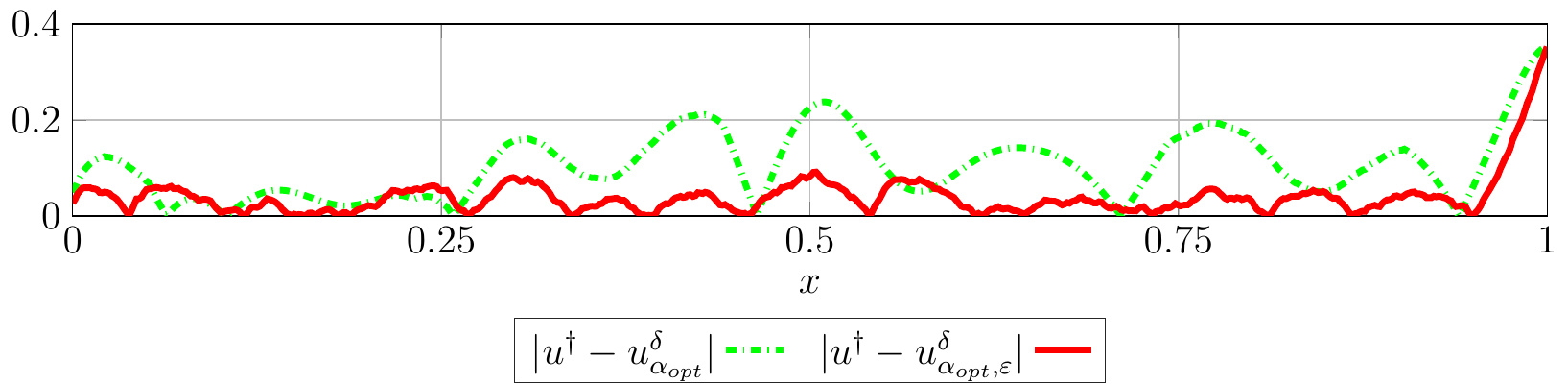}
    \caption{Absolute error of the reconstructions $u_{\alpha_{\mathrm{opt},\ve}}^\delta$ and $u_{\alpha_{\mathrm{opt}}}^\delta$ shown in~\Fref{F:figure7}.}
    %%FIGURENAME:morozov_STANDALONE_ERROR_for_N=600_q=1_delta=0.1_tau=2_qq=0.8_a0=10_epsi=0.4_alpha_opt=0.0010634_selected_index=41_dataerrorPercentage=1.3089
    \label{F:figure8}
\end{figure}

\begin{figure}%[H]
    \centering
    \includegraphics[width=0.7\textwidth]{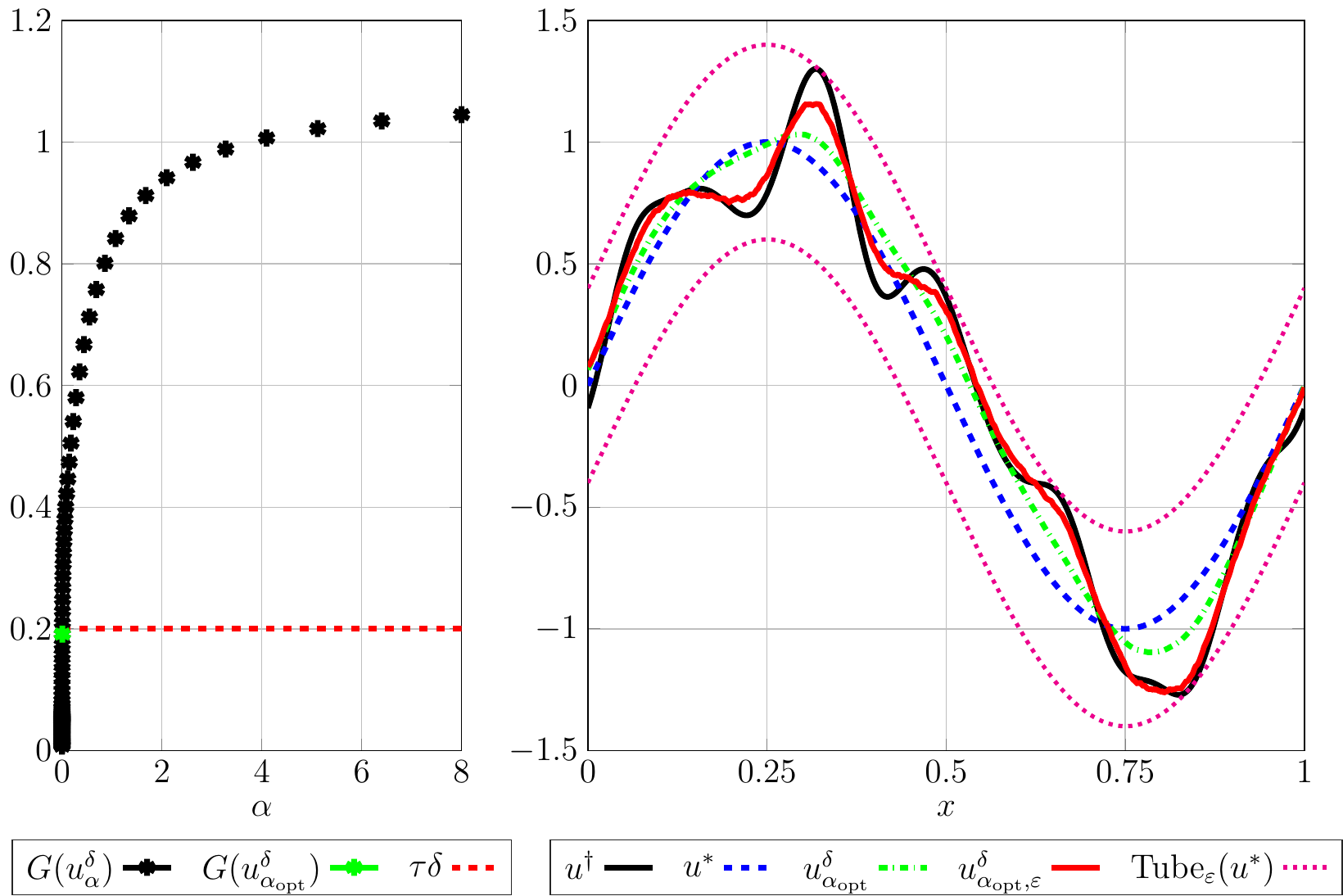}
   \caption{Morozov's discrepancy principle for $\delta=0.1$, $\tau=2$, $\ve=0.4$ and $q=2$. On the left we have the discrepancy values for different values of $\alpha$ and the one for $\alpha_{\mathrm{opt}}=0.0063$ (in green). On the right we plot the reconstructed $u_{\alpha_{\mathrm{opt}},\ve}^\delta$, $u_{\alpha_{\mathrm{opt}}}^\delta$ for $\alpha_{\mathrm{opt}}$ in comparison to $\ud$ and $u^*$.}
    %%FIGURENAME:morozov_STANDALONE_FIGURE_for_N=600_q=2_delta=0.1_tau=2_qq=0.8_a0=10_epsi=0.4_alpha_opt=0.0063383_selected_index=33_dataerrorPercentage=1.0308
    \label{F:figure9}
\end{figure}

\begin{figure}%[H]
    \centering
    \includegraphics[width=0.7\textwidth]{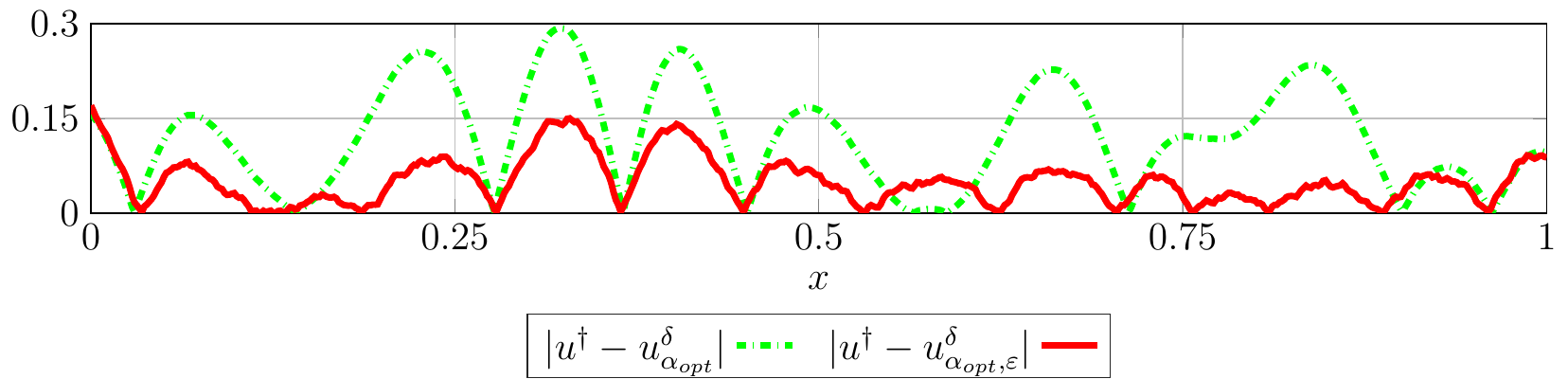}
    \caption{Absolute error of the reconstructions $u_{\alpha_{\mathrm{opt},\ve}}^\delta$ and $u_{\alpha_{\mathrm{opt}}}^\delta$ shown in~\Fref{F:figure9}.}
    %%FIGURENAME:morozov_STANDALONE_ERROR_for_N=600_q=2_delta=0.1_tau=2_qq=0.8_a0=10_epsi=0.4_alpha_opt=0.0063383_selected_index=33_dataerrorPercentage=1.0308
    \label{F:figure10}
\end{figure}

As can be seen in \Fref{F:figure7} and \Fref{F:figure9}, we obtain improved approximations of the true solution. In each of these figures our solution (in red) fits better the true solution (in black) than the one computed by the minimization of $\Jvd$ (in green). Moreover, in~\Fref{F:figure8} and~\Fref{F:figure10}, respectively, we compare the absolute error of these reconstructions with respect to the true solution $\ud$. Both example cases show that tolerances can indeed advance the quality of the approximation but one has to further examine under which scenarios this happens.

A drawback of the discrepancy principle is that it tends to select small values of $\alpha$ as the optimal one, which doesn't promote the use of tolerances. However, when the noise in the data is larger, the use of the discrepancy principle as the parameter choice rule makes the regularization stronger. Especially in the case of $\ve \geq \delta \geq \alpha$ we obtain better results. This can be seen in both figures. In contrast to the classical regularization, here the tolerances do not allow the reconstruction to rely solely on the reference solution.

The previous results were produced using the optimal regularization parameter for the generalized Tikhonov functional $\Jvd$. However, we can also use the discrepancy principle directly for finding the optimal regularization parameter for our functional $\Jevd$, i.e., implementing the discrepancy principle 
\[G(u_{{\alpha,\ve}}^\delta) := \norm{K u_{\alpha,\ve}^\delta - \vd} \leq \tau \delta, ~ \text{ with } ~ \uade := \arg\min \Jevd(u).\] 
This is shown in \Fref{F:figure11} where the discrepancy principle was used for finding the optimal regularization parameter $\alpha_{\mathrm{opt}} = 0.6872$ for $\tau = 4$, $\ve = 0.5$ and $q=2$. In the right plot of the figure we observe that indeed tolerances enhance the quality of our reconstruction. In~\Fref{F:figure12_error} we compare the absolute error of the generalized Tikhonov minimizer and our solution with respect to the ground truth and confirm that our approximation is closer to $\ud$. 
\begin{figure}%[H]
    \centering
    \includegraphics[width=0.7\textwidth]{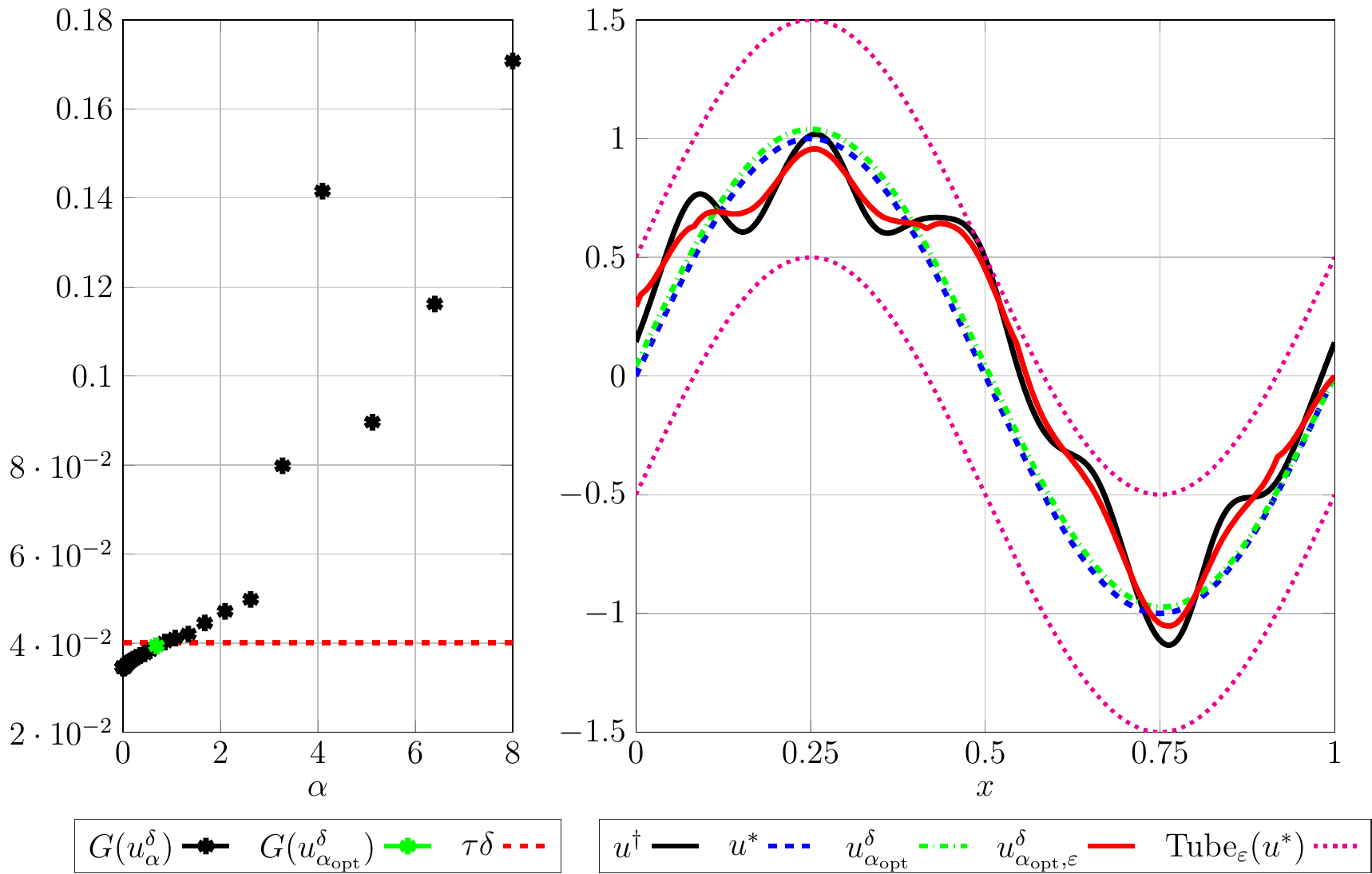}
    \caption{Morozov's discrepancy principle for $\delta=0.01$, $\tau=4$, $\ve=0.5$ and $q=2$. On the left we have the discrepancy values for different values of $\alpha$ and the one for $\alpha_{\mathrm{opt}}=0.6872$ in green. On the right we plot the reconstructed $u_{\alpha_{\mathrm{opt}},\ve}^\delta$ and $u_{\alpha_{\mathrm{opt}}}^\delta$ for $\alpha_{\mathrm{opt}}$ in comparison to $\ud$ and $u^*$.}
    %%FIGURENAME:morozov_STANDALONE_FIGURE_for_N=600_q=2_delta=0.01_tau=4_qq=0.8_a0=10_epsi=0.5_alpha_opt=0.68719_selected_index=12_dataerrorPercentage=0.10166
    \label{F:figure11}
\end{figure}

\begin{figure}%[H]
    \centering
    \includegraphics[width=0.7\textwidth]{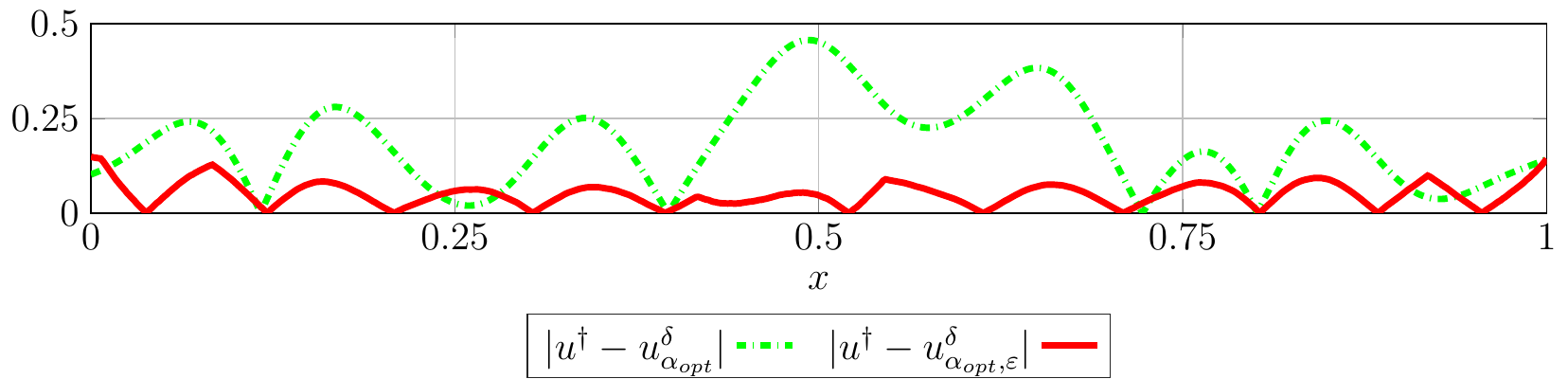}
    \caption{Absolute error of the reconstructions $u_{\alpha_{\mathrm{opt},\ve}}^\delta$ and $u_{\alpha_{\mathrm{opt}}}^\delta$ shown in~\Fref{F:figure11}.}
    %%FIGURENAME:morozov_STANDALONE_ERROR_for_N=600_q=2_delta=0.01_tau=4_qq=0.8_a0=10_epsi=0.5_alpha_opt=0.68719_selected_index=12_dataerrorPercentage=0.10166
    \label{F:figure12_error}
\end{figure}

In the left part of~\Fref{F:figure11}, we also observe that the values of the discrepancy principle $G(\uade)$ are not monotonically increasing as expected for linear operator problems. This means that the existence of an optimal $\alpha$ satisfying the discrepancy principle might not be guaranteed~\cite{Anzengruber_Ramlau2009}. This phenomenon also indicates that $\tau$ should be larger than (here we chose $\tau=4$) so that the discrepancy principle is surely satisfied. These are only some first results on how to choose the regularization parameter when incorporating tolerances. Of course, other strategies can be considered such as a relaxation of the discrepancy principle as proposed in~\cite{Anzengruber_Ramlau2009,Ramlau2002}, a generalization of the $L$-curve~\cite{Belge_2002} or other heuristic rules that have been proposed in~\cite{JinLorenz_2010}. Therefore, this topic is still open for further investigation.

\section{Remarks on tolerances and sparsity}

A question that arises naturally is whether sparse solutions can be promoted when tolerances are incorporated in the regularization term. Sparsity constrained Tikhonov regularization is well-studied, for example in~\cite{DDD} and~\cite{Grasmair2008}, and often it is imposed with the use of a (weighted) $\ell^q$-norm of the coefficients of $u$ with respect to a given orthonormal basis for $U$. Here, we examine the possibility of obtaining sparse solutions in our setting with tolerances, too. 

The true solution $\ud$ of \eref{invpb} is sparse if there exist only a finite number of non-zero coefficients with respect to the chosen orthonormal basis. Following the classical approach for sparsity, we consider an orthonormal basis ${\{\phi_i\}}_{i \in \N} \subset U$ and with coefficients $\cie := \inner{d_\ve(u),\phi_i}$ we define the regularization functional
\begin{equation*} 
    \Rqe(u) = \sum_{i \in \N} \abs{\inner{d_\ve(u),\phi_i}}^q \quad \text{for } \quad 1 \leq q \leq 2.
\end{equation*}
%We consider $U = \ell^q$ for $1 \leq q \leq 2$ and a given an orthonormal basis $\{\phi_i\}_{i \in \N} \subset \ell^q$. Using the basis elements, we can write any $u \in \ell^q$ as a series $\sum_{i \in \N}c_i\phi_i$ with expansion coefficients given by $c_i=\inner{u,\phi_i}$. 
%Moreover, for $u\in \ell^q$ we can easily confirm that $d_\ve (u)\in \ell^q$, too. Therefore, there exist coefficients $\cie := \inner{d_\ve(u),\phi_i}$ for $i \in \N$ such that $d_\ve(u)$ can be written as a linear combination of $\cie$ with the elements of the orthonormal basis $\{\phi_i\}_{i \in \N} \subset \ell^q$, that is,
%\[d_\ve(u) = \sum \limits_{i \in \N} \cie \phi_i. \] 
%We use the $\ell^q$-norm on the coefficients
%\begin{equation} 
%    \norm{u}_{\ell^2,\ve}^q = \norm{d_\ve(u)}_{\ell^2}^q = \sum_{i \in \N} \abs{\inner{d_\ve(u),\phi_i}}^q < \infty
%\end{equation}
%to define the regularization functional as 
%\begin{equation*}
%    \Rqe(u) := \sum_{i \in \N} \abs{\inner{d_\ve(u),\phi_i}}^q \quad \text{for } \quad 1 \leq q \leq 2.
%\end{equation*}
Assuming that the minimization of the generalized Tikhonov functional 
\begin{equation}\label{E:Tkhn_Sparse}
    \Jvd(u) = \norm{F(u) - \vd}^p + \alpha \sum_{i \in \N} \abs{\inner{u,\phi_i}}^q
\end{equation}
yields a sparse regularized solution $\uad$, we investigate if the same is true for the functional
\begin{equation}
    \Jevd(u) = \norm{F(u) - \vd}^p + \alpha \sum_{i \in \N} \abs{\inner{d_\ve(u),\phi_i}}^q.
\end{equation}
%where tolerances are introduced in the basis expansion of the penalty term.
This, basically, means that we examine the connection between the coefficients $\ci := \inner{u,\phi_i}$ and $\cie = \inner{d_\ve(u),\phi_i}$.
%with the first coefficients originating from the basis expansion $u = \sum_{i \in \N} \ci \phi_i$ and the latter from $d_\ve(u) = \sum_{i \in \N} \cie \phi_i$, for $u \in U$. 
Let us have a look at a coefficient $\cie$, it is given as
\begin{eqnarray}
    \cie = \inner{\de(u),\phi_i} &= 
    \cases{
        \inner{\abs{u} -\ve, \phi_i} &$\abs{u}>\ve$\\ 
        0 &$\abs{u}\leq \ve$ 
    } \nonumber \\ &= 
    \cases{
        -(\inner{u,\phi_i} + \inner{\ve, \phi_i}) &$u < -\ve$\\ 
        0 &$\abs{u}\leq \ve$ \\ 
        \inner{u,\phi_i} - \inner{\ve,\phi_i} &$u > \ve$
    } \nonumber \\ &=
    \cases{
        -(\ci + \inner{\ve, \phi_i}) &$u < -\ve$\\ 
        0 &$\abs{u} \leq \ve$ \\ 
        \ci- \inner{\ve,\phi_i} &$u > \ve$
    }.\label{Eq:coeffsExpression}
\end{eqnarray}
Since $u$ and $\de(u)$ can be two functions that differ significantly based on the value of $\ve$, they will naturally have different coefficients as well. Therefore, we do not aim to compare the coefficient values but to examine if the application of tolerances can additionally enhance sparsity. The sparsity assumption on the coefficients $\ci$ is made to confirm if $\cie$ can be sparse or even, sparser. From the expression in~\eref{Eq:coeffsExpression} it is easy to check that if a coefficient $\ci$ is zero, it is rather improbable that $\cie$ will also be zero. For a function that is known to be sparse on the chosen basis, this means that with the tolerance assumption the solution does not remain sparse.

With the above discussion, we conclude that when applying the tolerance function, sparsity is lost as some of the initially inactive coefficients most likely are shifted away from zero. To illustrate this effect, we present a simple example.

\subsection{Example} 
We assume the function $u(t) = \sin(t)$ with $t \in [-2\pi, 2\pi]$ and we consider the Fourier basis, which is commonly used for the approximation of $2\pi$-periodic functions. We apply the $\ve$-insensitive distance on $u$ and then compute the Fourier approximations of $u$ and $d_\ve(u)$ using the Fourier series expansion. Our aim is to examine the sparsity of the computed Fourier coefficients. In our example we use a constant tolerance area around zero denoted by $\mathrm{Tube}_\ve(0) = \{ t \in [-2\pi,2\pi] ~ \text{s.t.}~ -\ve \leq y(t) \leq \ve\}$. Moreover, we know that the Fourier approximation of the sine function has only one non-zero coefficient, which is equal to $1$. 

In \Fref{F13a}, we plot the graphs of $u$ and $d_\ve(u)$ as well as the tolerance area which is shaded in pink. We symbolize the Fourier approximations $\hat{u},\hat{d_\ve}(u)$ and denote their Fourier coefficients by $\alpha_n,\beta_n$ and $\alpha_n^\ve, \beta_n^\ve$, respectively. \Fref{F13b} shows the computed Fourier coefficients using the first $20$ terms in the Fourier series. The figure illustrates what was previously mentioned, namely, that the approximation of the sine function has only one non-zero Fourier coefficient while for approximating $d_\ve(u)$ there are more non-zero coefficients, which indicates that $d_\ve$ is not sparse in $u$.
\begin{figure}%[H]
    \centering
    \subfloat[]{\label{F13a}
    \includegraphics[width=0.45\textwidth]{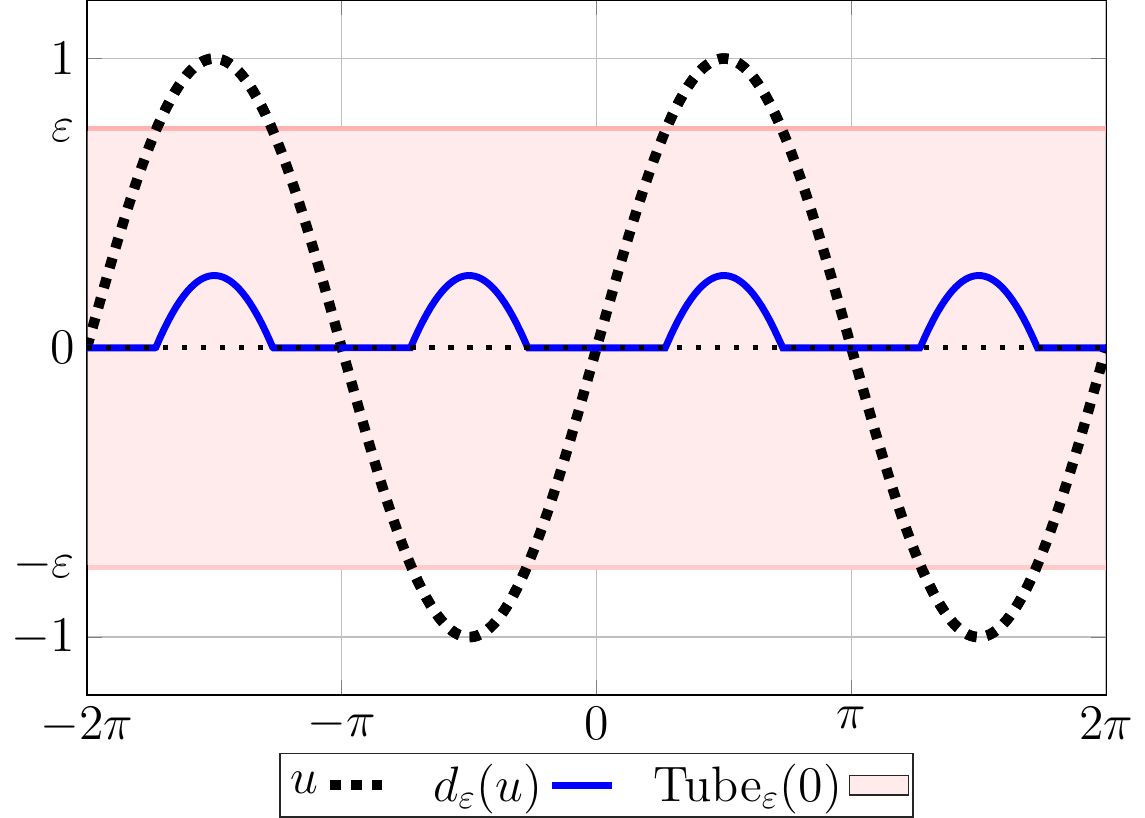}}%
	\subfloat[]{\label{F13b}
	\includegraphics[width=0.45\textwidth]{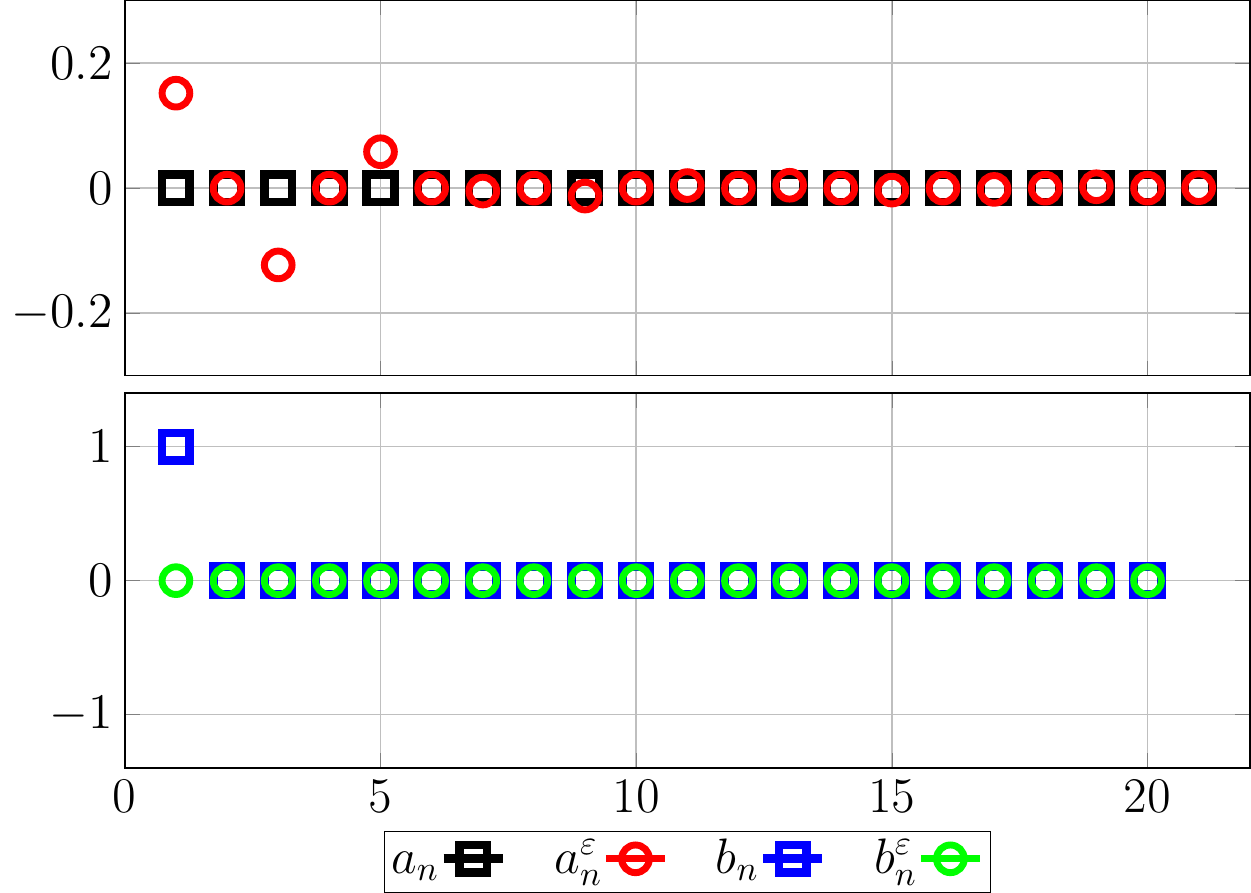}}
	\caption{The functions $u$ and $d_\ve(u)$ plotted together with a constant tolerance area defined for $\ve = 0.75$ are shown in (a). The first $20$ coefficients from the Fourier approximations of $u$ and $d_\ve(u)$ are shown in (b).}
	\label{F:Figure13}
\end{figure}

The fact that the tolerance function is does not promote sparse reconstructions does not necessarily mean that our approach cannot prove useful. Moreover, when sparsity is required one can adopt an alternative approach by following the idea of the elastic net regularization, see for instance~\cite{JinElasticNet2009,Schiffler2010,Zou_Hastie_2005}. By doing so, both tolerances and sparsity constraints can be taken into account in the solution.

\section{Conclusion}

In this paper we discuss a modified Tikhonov functional with tolerances in the regularization term that allow for small deviations to be included in the solution. The existence, stability and weak convergence of minimizers for this functional is proved, as well as the well-posedness of minimizers when the tolerances converge to zero, that is, when returning to the generalized Tikhonov approach. In addition, we provide convergence rates of the minimizers in the Bregman distance. 

The theoretical analysis is followed by numerical results on an academic example. These results confirm that improved reconstructions are possible when tolerances are used in the regularization term. We also discuss parameter choice rules that can be used in combination with the appropriate tolerances for better fitting the true solution. As our approach is rather new, the parameter choice rules need to be further investigated in order to clarify what best fits our framework. 

Depending on the problem in hand, the structure of the solution may not need to be sparse. In this case, our approach is valid as its well-posedness is proved. On the other hand, if sparsity is required, we can still achieve it by introducing an $\ell^1$-norm penalty term in our functional. This approach is motivated by the elastic net regularization that has previously been used in~\cite{JinElasticNet2009,Schiffler2010,Zou_Hastie_2005}. In this work the authors use a second penalty term (in the $\ell^2$-norm) to their sparsity-promoting Tikhonov functional in order to guarantee stability. The idea of adapting our functional to the elastic-net approach is a topic of further consideration and future work.

\ack{The authors gratefully acknowledge financial support by the German Research Foundation (DFG) within the framework of RTG 2224/1 ``$\pi^3$: Parameter Identification --- Analysis, Algorithms, Applications''. 
The authors also wish to thank Phil Gralla for providing the subgradient algorithm that was used for the minimization of the functional in the numerical examples.
}

\section*{References}

%\bibliographystyle{plain}
%\bibliography{biblio}

\end{document}